\documentclass[a4paper,11pt]{amsart}
\usepackage{amssymb,amsfonts,amsxtra,
	mathrsfs,graphicx,verbatim,stmaryrd,hyperref,cite,color, tikz-cd}
\usepackage[margin=9mm,
marginparwidth=20mm,     
marginparsep=1mm,       
]
{geometry}
\usepackage[all]{xy}
\xyoption{line}
\usepackage{fullpage}
\usepackage{euscript}
\newtheorem{theorem}{Theorem}[section]
\newtheorem{cor}[theorem]{Corollary}
\newtheorem{lem}[theorem]{Lemma}
\newtheorem{prop}[theorem]{Proposition}

\theoremstyle{definition}

\newtheorem{defi}[theorem]{Definition}
\newtheorem{rem}[theorem]{Remark}

\numberwithin{equation}{section}
\DeclareMathOperator{\B}{\mathcal B}
\DeclareMathOperator{\C}{\mathcal C}
\DeclareMathOperator{\D}{\mathcal D}
\DeclareMathOperator{\dgcat}{dgCat}
\DeclareMathOperator{\DGA}{DGA}
\DeclareMathOperator{\Hom}{Hom}
\DeclareMathOperator{\End}{End}

\newcommand{\noproof}{\begin{flushright} \ensuremath{\square}
\end{flushright}}
\def\ground{\mathbf{k}}

\def\id{\operatorname{id}}

\thanks{}

\subjclass[2020]{18N40, 18G80, 18G35}
\begin{document}
\begin{abstract}
	The notion of one-sided localization in the homotopy invariant context is developed for dg algebras and dg categories. Applications include a simple construction of derived localization of dg algebras and dg categories, and a refinement of Drinfeld's quotient of pretriangulated dg categories.
\end{abstract}
\title[One-sided localization in dg categories]{One-sided localization in dg categories}
\author{Joe Chuang, Andrey Lazarev}
\thanks{This work was partially supported by EPSRC grants EP/T029455/1 and EP/N016505/1}

\thanks{}

\maketitle
\tableofcontents
\section{Introduction}
Localization is a standard and important tool in algebra; its early applications allow one to construct integers out of natural numbers and rational numbers out of integers. In the noncommutative context localization exhibits considerable subtleties and is best treated as a nonabelian derived functor in the framework of model categories, cf. \cite{Toen, BCL}.

There is also a natural notion of \emph{one-sided} localization. Given an associative algebra $C$ over a commutative ring $\ground$ and an element $v\in C$, one can form the algebra $C_v:=C\langle w\rangle/(vw=1)$. The algebra $C_v$ comes equipped with a map from $C$ such that the image of $v$ in $R_v(C)$ has a right inverse. It clearly has an appropriate universal property characterizing it. This construction (in a more general context of one-sided matrix inversion) was studied in \cite{Cohn}.

In the present work we consider the problem of constructing one-sided localization in a homotopy invariant context. Thus, we start with a differential graded (dg) associative algebra $C$ and a cycle $v\in A$. Then the derived right inversion (or localization) of $A$ at $v$ is given by $R_v(C):=A\langle w,u\rangle$ with the differential $d(u)=vw-1$; the left derived inversion is defined symmetrically. Still more generally, $C$ can be taken to be a dg category and $v$ to be a closed morphism in it; then one can form a dg category $R_v(C)$ where $v$ is `universally inverted on the right up to homotopy'. The latter expression needs explaining, and much of the paper is about making sense of it and putting it in a proper context. One application of the developed theory is a simple construction of the derived (two-sided) localization of a dg category as a composition of a left- and right-sided localizations, and its characterization by a universal property, cf. Proposition \ref{prop:2sideduniversal} below. This is a version of a result originally due to To\"en, \cite[Corollary 8.7]{Toen}.

We also consider a simpler construction of \emph{killing} a cycle in a dg algebra (or, more generally, a closed morphism in a dg category). Given a dg algebra $C$ and a cycle $v\in C$ we can form $C/v:=C\langle w\rangle$ with $d(w)=v$. The construction $C/v$ does not depend on the choice of $v$ in its homology class and enjoys an appropriate universal property up to homotopy. 

Our main result concerns the situation when $\C$ is a pretriangulated dg category (such as the category of complexes of modules over a ring). Given a  morphism $v:B\to C$ in $\C$, we extend it to a triangle 
\begin{equation}\label{eq:triangle}	\begin{tikzcd}
		A \arrow[r,"x"] &
		B \arrow[r,"v"] &
		C \arrow[r,"s"] & \Sigma A
	\end{tikzcd}
\end{equation}
Then the operation of killing $v$ in $\C$ is equivalent to left-localizing at $s$ or right-localizing at $x$. This is the content of Theorem \ref{killingvslocalisation}. If $B=C$ and the map $v$ is the identity map, then killing $v$ is precisely the dg localization of Drinfeld's, cf. \cite{Drinfeld} which is the dg-version of the Verdier quotient by the object $B=C$. As an application, we obtain the following refinement of Drinfeld's construction.  Given a triangle (\ref{eq:triangle}), the Drinfeld quotient of $\C$ by the full pretriangulated subcategory generated by $B$ is quasi-equivalent to either $(\C/x)/v$ or $(\C/v)/x$. In particular, the dg categories $(\C/x)/v$ or $(\C/v)/x$ are quasi-equivalent and do not depend, up to quasi-equivalence, on  individual morphisms $v$ and $x$ so long as they are consecutive morphisms of a triangle in $\C$.

The paper is organized as follows. In Section 2 we construct a relative cylinder in the category of dg categories; this is a categorical version of the Baues-Lemaire cylinder for dg algebras \cite{BL}. This construction is simpler than that of an absolute cylinder and is likely to be useful in other contexts. Section 3 introduces the notion of the killing of a closed morphism in a dg category and gives its homotopy invariant characterization using the previously developed construction of a cylinder. Section 4 similarly introduces and gives homotopy characterization of a one-sided localization whereas in Section 5  one-sided localization is used to construct ordinary, i.e. two-sided localization. The case of a dg algebra (which can be viewed as a dg category with one object) is considered in parallel and similar results are obtained. Finally, in Section 6 the case of pretriangulated categories is treated and the refinement of Drinfeld's quotient, mentioned above, is constructed.
\subsection{Notation and conventions}
We work in the category of dg modules over a fixed commutative ring $\ground$; the notation $\Hom$ stands for the set of $\ground$-linear homomorphisms. All dg-modules are homologically graded and we will write $\Sigma$ for the homological suspension. A dg category is a category enriched over dg $\ground$-modules and they will be denoted by calligraphic letters such as $\C$. The category of dg categories will be denoted by $\dgcat$. The subcategory of $\dgcat$ consisting of dg categories with one object will be denoted by $\DGA$; it is the category of dg algebras. It is well-known that $\dgcat$ and DGA form  model categories \cite{Tabuada, Hinich}; we refer to \cite{Kel} for the background material and terminology on dg categories and dg functors. The model categories $\dgcat$ and $\DGA$ are not left proper unless $\ground$ is a field and this may lead to homotopy non-invariant constructions; to avoid this, we will always tacitly assume our dg categories or dg algabras to be flat over $\ground$. Given an object $O$ in a category $\C$, we will denote by $O\downarrow\C$ the category of objects under $O$, i.e. morphisms in $\C$ of the form $O\to X$ with maps between such morphisms being obvious commutative triangles.

\section{Relative cylinder in dg categories}
Let $\C,\D$ be  dg categories and $F:\C\to\D$ be a dg functor. We assume that $\C$ and $\D$ have the same collection of objects and that $F$ acts identically on objects. In addition, we assume that $\D$ is free relative to $\C$ when differentials are disregarded. In other words, $\D$ is generated, as a nondifferential category, by $\C$ and a collection of morphisms $s_\alpha\in \Hom(\D), \alpha\in I$ where $I$ is an indexing set with no relations imposed between $s_\alpha$. The functor $F$ then exhibits $\C$ as a dg subcategory of $\D$. 

We will construct explicitly a cylinder object for $\D$ in $\C\downarrow\dgcat$, the under category of $\C$ in $\dgcat$. 
We start with the notion of a derivation adapted to the categorical framework.
\begin{defi}
 Let $\C_1$ and $\C_2$ be two dg categories and $F, G:\C_1\to\C_2$ be dg functors. Then an \emph{$(F,G)$-derivation} of $\C_1$ with values in $\C_2$ is a homogeneous $\ground$-linear function $f$ associating to any object $c$ in $\C_1$ an object $f(c)$ in $\C_2$ and to any morphism $x:c\to c'$ in $\C$ a morphism $f(x):f(c)\to f(c')$ such that the Leibniz rule holds: $f(x_1 x_2)=f(x_1)G(x_2)+(-1)^{|f||x_1|}F(x_1)f(x_2)$.
\end{defi}
	Consider the dg category $\D_1\coprod_{\C}\D_2$ where $\D_1$ and $D_2$ are both isomorphic to $\D$; given a morphism $s\in \Hom(\D)$, we will write $s^1$ and $s^2$ for the corresponding morphisms in  $\D_1\coprod_{\C}\D_2$. Let $C_{\C}(\D)$ be the graded category freely generated over $\D_1\coprod_{\C}\D_2$ by morphisms $\bar{s}_\alpha: d\to d'$, one for each generator $s_\alpha\in \D: d\to d'$, and such that $|\bar{s}_\alpha|=|s_\alpha|+1$.  Let $f$ be the $(i_1,i_2)$-derivation of $\D$ with values in $C_{\C}(\D)$ (where $i_1$ and $i_2$ are the two inclusions of $\D$ into $C_{\C}(\D)$) determined by the rule $f(s_\alpha)=\bar{s}_\alpha$.
	\begin{defi}	
Let the differential on  $C_{\C}(\D)$ be determined by the requirement that both $\D_1$ and $\D_2$  are dg subcategories in $C_{\C}(\D)$ and $d(\bar{s}_\alpha)=s^1_\alpha-s^2_\alpha-f(ds_\alpha)$. The dg category $C_{\C}(\D)$ so defined, is called the $\C$-relative cylinder of $\D$
\end{defi}
\begin{lem}
	The differential in $C_{\C}(\D)$ squares to zero.
\end{lem}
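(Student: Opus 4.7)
The plan is to reduce the claim to checking $d^2 = 0$ on a generating set and then verify this via an auxiliary homotopy identity for the derivation $f$. Since $d$ is a graded derivation of odd degree, a short calculation shows that $d^2$ is again a graded derivation of $C_{\C}(\D)$ (the two cross terms $(-1)^{|x|-1} dx \cdot dy$ and $(-1)^{|x|} dx \cdot dy$ coming from $d^2(xy)$ cancel). Because $C_{\C}(\D)$ is freely generated, as a graded category, over $\D_1 \coprod_{\C} \D_2$ by the morphisms $\bar s_\alpha$, any graded derivation on it is determined by its restriction to $\D_1 \coprod_{\C} \D_2$ together with its values on the $\bar s_\alpha$. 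On $\D_1$ and $\D_2$ separately, and hence on their pushout over $\C$, we have $d^2 = 0$ by the assumption that both are dg subcategories, so the substance of the lemma is the vanishing $d^2 \bar s_\alpha = 0$ for each generator $s_\alpha$.

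To handle $\bar s_\alpha$ I plan first to establish the auxiliary identity
\begin{equation*}
df(x) + f(dx) = i_1(x) - i_2(x) \tag{$\ast$}
\end{equation*}
for every morphism $x$ of $\D$; informally, $(\ast)$ expresses that $f$ is a chain homotopy from $i_2$ to $i_1$. Granting $(\ast)$, direct computation gives
\begin{equation*}
d^2 \bar s_\alpha = d\bigl(s^1_\alpha - s^2_\alpha - f(ds_\alpha)\bigr) = (ds_\alpha)^1 - (ds_\alpha)^2 - df(ds_\alpha),
\end{equation*}
and applying $(\ast)$ with $x = ds_\alpha$, together with $d^2 s_\alpha = 0$ in $\D$, yields $df(ds_\alpha) = (ds_\alpha)^1 - (ds_\alpha)^2$, so the expression vanishes.

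The proof of $(\ast)$ proceeds by induction on the length of $x$ written as a composition of generators of $\D$ considered as a non-differential category, namely morphisms of $\C$ and the $s_\alpha$. For $x$ a morphism of $\C$ we have $f(x) = 0$, and since $\C$ is a dg subcategory also $dx \in \C$ so $f(dx) = 0$, while $i_1(x) = i_2(x)$; both sides of $(\ast)$ vanish. For $x = s_\alpha$ the identity is just a rearrangement of the defining formula $d\bar s_\alpha = s^1_\alpha - s^2_\alpha - f(ds_\alpha)$. For the inductive step $x = x_1 x_2$, the Leibniz rules for $d$ and for the degree-one $(i_1,i_2)$-derivation $f$ expand $df(x_1 x_2)$ and $f(d(x_1 x_2))$ into sums of products; applying the inductive hypothesis to $x_1$ and $x_2$ produces the two terms $i_1(x_1 x_2)$ and $-i_2(x_1 x_2)$ as well as eight remaining cross-terms.

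The main obstacle is purely bookkeeping: keeping the Koszul signs consistent through the expansion so that the mixed terms $f(dx_1) i_2(x_2)$, $i_1(x_1) f(dx_2)$, $f(x_1) i_2(dx_2)$ and $i_1(dx_1) f(x_2)$ cancel in pairs between $df(x_1 x_2)$ and $f(d(x_1 x_2))$, leaving only the desired difference $i_1(x_1 x_2) - i_2(x_1 x_2)$. Once the sign computation in the inductive step is carried out carefully, the identity $(\ast)$, and hence the lemma, follows.
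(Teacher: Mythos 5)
Your proof is correct and follows essentially the same route as the paper: your identity $(\ast)$ is exactly the paper's key formula $[d,f](x)=x^1-x^2$, which the paper likewise establishes by noting that both sides are $(i_1,i_2)$-derivations agreeing on the generators (your induction on word length is just this observation spelled out), and the final computation of $d^2\bar s_\alpha$ is identical.
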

\begin{proof}
Let us define the $(i_1,i_2)$-derivation $[d,f]$ of $\D$ with values in $C_{\C}(\D)$ by the formula
$
[d,f](x)=d(f(x))+f(dx)$
where $x\in\Hom(\D)$. Then the following formula holds for any $x\in\Hom(\D)$:
\begin{equation}\label{eq:derivation}
[d,f](x)=x^1-x^2
\end{equation}
Indeed, both sides of the above are $(i_1,i_2)$-derivations that agree when applied to any generator $s_\alpha$ using the definition of $f$; this confirms the validity of (\ref{eq:derivation}) in general. Next, we have for any generator $s_\alpha\in\Hom(\D)$:
\begin{align*}
d^2(\bar{s}_\alpha)&=d(s_\alpha^1-s_\alpha^2-f(ds_{\alpha}))\\
&=ds_\alpha^1-ds_\alpha^2-d(f(ds_{\alpha}))\\&=ds_\alpha^1-ds_\alpha^2-[d,f](ds_\alpha)\\&=0
\end{align*}
where (\ref{eq:derivation}) is used for the last equality.
\end{proof}	
Note that the map $C_{\C}(\D)\to \D$ sending both $\D_1,\D_2$ identically to $\D$ and the generators $\bar{s}$ to zero is a map of dg categories. By construction, $\D\coprod_{\C}\D$ is a dg subcategory of $C_{\C}(\D)$. In other words, $C_{\C}(\D)$ factors the codiagonal map $\D\coprod_{\C}\D\to \D$ in  $\C\downarrow\dgcat$.

We now assume, in addition, that $\D$ is \emph{cofibrant} over $\C$. More specifically, we assume that there is an additional weight indexing by natural numbers on the set of generators $s_\alpha$ so that $d(s_{\alpha})$ is of a sum of elements of strictly smaller weight, where the weight of $\C$ is taken to be zero.

\begin{prop}
	The dg category $C_{\C}(\D)$ is a good cylinder object in $\C\downarrow\dgcat$.
\end{prop}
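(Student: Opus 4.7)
The plan is to verify the two defining conditions of a good cylinder object, since the factorisation of the codiagonal $\D\coprod_{\C}\D \to C_{\C}(\D) \to \D$ is already established in the preceding discussion: (a) the inclusion $\D\coprod_{\C}\D\hookrightarrow C_{\C}(\D)$ is a cofibration in $\C\downarrow\dgcat$, and (b) the projection $p:C_{\C}(\D)\to \D$ is a quasi-equivalence.

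For (a) I would use the cofibrancy of $\D$ over $\C$, which equips the generators $s_\alpha$ with a weight such that $ds_\alpha$ has strictly smaller weight. Extending this weight to the $\bar{s}_\alpha$ by declaring $w(\bar{s}_\alpha)=w(s_\alpha)$, the formula $d\bar{s}_\alpha = s^1_\alpha - s^2_\alpha - f(ds_\alpha)$ combined with the $(i_1,i_2)$-derivation property of $f$ shows that $d\bar{s}_\alpha$ lies in the sub-dg-category generated by $\D\coprod_{\C}\D$ and the $\bar{s}_\beta$ of strictly smaller weight. Attaching the $\bar{s}_\alpha$ in non-decreasing order of weight therefore realises the inclusion as a transfinite composition of pushouts along generating cofibrations of $\dgcat$, and hence as a cofibration; this is preserved in the under-category $\C\downarrow\dgcat$ since the source is fixed.

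For (b), since $p \circ i_1 = \id_{\D}$, by two-out-of-three it suffices to show $i_1: \D \to C_{\C}(\D)$ is a quasi-equivalence; as $i_1$ is the identity on objects, this reduces to the statement that it induces a quasi-isomorphism on each Hom complex. I would introduce a filtration $F_n\subseteq \Hom_{C_{\C}(\D)}(a,b)$ spanned by composable chains whose total primary weight carried by the $s^2_\alpha$- and $\bar{s}_\alpha$-factors is at most $n$ (with $\D_1$- and $\C$-factors counted as zero). The cofibrancy bound on $ds_\alpha$ together with the derivation property of $f$ ensures that $d(F_n)\subseteq F_n$ and that in the associated graded $d(\bar{s}_\alpha)=-s^2_\alpha$ and $d(s^2_\alpha)=0$ whenever $w(s_\alpha)>0$. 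The positive-weight piece of the associated graded on each Hom complex then decomposes into tensor products of contracting pairs $(\bar{s}_\alpha,s^2_\alpha)$ with Hom-spaces of the weight-zero subcategory, and is therefore acyclic. The weight-zero piece is the $\C$-relative cylinder of the dg subcategory $\D^{(0)}\subseteq \D$ generated over $\C$ by closed generators, a situation for which the inclusion of one copy is a quasi-isomorphism by the classical Baues--Lemaire calculation for free dg algebras. Convergence of the spectral sequence of the filtration then implies that $i_1$ is a quasi-isomorphism on each Hom complex.

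The principal technical obstacle lies in part (b): one must verify that the correction term $f(ds_\alpha)$ in $d\bar{s}_\alpha$ has secondary weight strictly smaller than $w(\bar{s}_\alpha)$, so that the associated graded really takes the simple form $\bar{s}_\alpha\mapsto -s^2_\alpha$. A naive count of bar generators fails because of the derivation rule for $f$, and the argument depends essentially on using the primary weight supplied by the cofibrancy of $\D$ over $\C$ together with a separate base-case treatment for the weight-zero generators. Once this filtration is in place, the remaining steps are formal applications of the model-category axioms for $\dgcat$.
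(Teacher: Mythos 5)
Your proof is essentially correct, and while it shares the paper's basic strategy (a weight filtration on Hom-complexes plus a spectral-sequence comparison), the details are genuinely different. The paper proves directly that the projection $p:C_{\C}(\D)\to\D$ is a quasi-equivalence: it filters $\Hom_{C_{\C}(\D)}(d,d')$ by the \emph{total} weight of all generators $s^1_\alpha, s^2_\alpha,\bar s_\alpha$, identifies the associated graded with $C_{\C}(\operatorname{gr}\D)$, and disposes of the resulting zero-differential case by observing that the dg quiver on $s^1,s^2,\bar s$ with $d\bar s=s^1-s^2$ has homology the original quiver and that free-category formation commutes with homology. You instead reduce, via two-out-of-three and $p\circ i_1=\id$, to showing that $i_1:\D\to C_{\C}(\D)$ is a quasi-equivalence, and you use a one-sided filtration counting only the $s^2$- and $\bar s$-weights; in your associated graded the pairs $(\bar s_\alpha,s^2_\alpha)$ become contractible two-term complexes, and the positive filtration degrees split as direct sums of tensor products each containing such a factor, hence are acyclic. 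Your version avoids a separate zero-differential case at the price of a base-weight analysis, and it keeps the full differential on $\D_1$ throughout; the paper's version treats $s^1$ and $s^2$ symmetrically. You also verify the cofibration half of ``good cylinder,'' which the paper leaves implicit with the remark that only the quasi-equivalence needs proof.

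One small inaccuracy: the weight-zero piece $F_0$ of your filtration is not the $\C$-relative cylinder of $\D^{(0)}$. Since $s^1_\alpha$ carries weight zero for \emph{every} $\alpha$, $F_0$ contains all of $\D_1$, not just $\D_1^{(0)}$; it is $\D_1$ freely extended over $\C$ by the weight-zero pairs $(s^2_\alpha,\bar s_\alpha)$ with $d\bar s_\alpha=s^1_\alpha-s^2_\alpha$ (the correction term $f(ds_\alpha)$ vanishes for such $\alpha$). This does not damage the argument: the change of generators $\tilde s_\alpha:=s^2_\alpha-s^1_\alpha$ exhibits $F_0$ as $\D_1$ freely extended by contractible pairs $(\tilde s_\alpha,\bar s_\alpha)$ with $d\bar s_\alpha=-\tilde s_\alpha$, so the same tensor-product decomposition shows that $\Hom_{\D_1}(d,d')\hookrightarrow F_0$ is a quasi-isomorphism, which is exactly what your spectral-sequence comparison requires.
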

\begin{proof}
One only needs to prove that the map $C_{\C}(\D)\to \D$ described above is a quasi-equivalence. Let us consider first the case when the differential in $\D$ (and so also in $\C$) is zero. Let $\C\langle \{s_\alpha\}\rangle$ be the graded quiver whose vertices are the objects of $\C$ and graded spaces of arrows are spanned by $\Hom(\C)$ together with the generators $s_\alpha, \alpha\in I$. Let $C_{\C}(\C\langle \{s_\alpha\}\rangle)$ be the dg quiver that has the same vertices as $\C\langle \{s_\alpha\}\rangle$ and three types of basis arrows: $s^1,s^2$ and $\bar{s}$ for every generator $s$; the differential has the form $d(\bar{s})=s^1-s^2$. It is clear that the homology of the latter quiver is isomorphic to $\C\langle \{s_\alpha\}\rangle$. The dg category $C_{\C}(\D)$ is isomorphic to the $\C$-relative free category on  $C_{\C}(\C\langle \{s_\alpha\}\rangle)$. Since the formation of the free category commutes with the homology, the desired statement follows.

Now let us consider the general case. Since the set of objects is constant throughout, it suffices to show that for any two objects $d,d'$, the map 
$\Hom_{C_{\C}(\D)}(d,d')\to \Hom_{\D}(d,d')$ 
is a quasi-isomorphism. Consider the increasing filtration on $\Hom_{C_{\C}(\D)}(d,d')$
\[F_0:=\Hom_{\D_1\coprod_{\C}\D_2}(d,d')\subset F_1\subset \ldots F_p\subset\ldots. \] 
Here the filtration component $F_p$ is a dg subspace of $\Hom_{C_{\C}(\D)}(d,d')$ consisting of morphisms that are linear combinations of products of generators
of weights summing to at most $p$, where the weights of $s^1_{\alpha}, s^2_{\alpha}$ and $\bar{s}_{\alpha}$ are the same as that of $s_\alpha$.
  It is clear this this filtration is compatible with the differential and exhaustive. The differential $d_1$ of the associated spectral sequence is induced by the differentials applied to the generators $\bar{s}_{\alpha}$  so, the $E_1$-page is isomorphic to $\Hom_{C_{\C}(\operatorname{gr}\D)}(d,d')$ where $\operatorname{gr}\D(d,d')$ is the associated graded with respect to the filtration considered above. Comparing it to the spectral sequence of $\Hom_{C_{\C}(\D)}(d,d')$, we conclude that they are isomorphic and so $C_{\C}(\D)\to \D$ is indeed a quasi-equivalence.
\end{proof}

\begin{rem}
	This result is a categorical version of the Baues-Lemaire cylinder for free differential graded algebras, see \cite{BL}; in the case of categories with one object and $\C=\ground$, it specializes to the Baues-Lemaire result. However, in the situation of dg algebras, working relative $\ground$ is equivalent to working absolutely whereas in the categorical framework it is essential to work with categories with a fixed set of objects (which is achieved by considering relative categories of the specified kind), otherwise a cylinder object will be more complicated. 
\end{rem}
\section{Killing morphisms in a dg category}
In this section we describe the procedure of killing a morphism in a dg category in a homotopy invariant fashion.
Let $\C$ be a dg category, $X,Y$ be objects of $\C$ and $v\in\Hom_n(X,Y)$ be a closed morphism between $X$ and $Y$, so $d(v)=0$. We will construct a functor
\[ \kappa_v:\C\downarrow\dgcat\to\operatorname{Sets}\]
from the undercategory of $\C$ to the category of sets.
\begin{defi} Given a dg category $\D$ supplied with a dg functor $F:\C\to\D$, set
\[
\kappa_v(\D)=\{w\in \Hom_{\D}(F(X),F(Y))_{n+1}:d(w)=F(v)\}/B_{n+1}[\Hom_{\D}(F(X),F(Y))].
\] 
\end{defi}
\begin{rem}
Note that if $w$ is such that $d(w)=v$ and for $u\in \Hom_{\D}(F(X),F(Y))_{n+1}$ it holds that $d(u)=0$, then $d(w+u)=w$, so the quotient by boundaries $B_{n+1}[\Hom_{\D}(F(X),F(Y))]$ is well-defined. Clearly the set $\kappa_v(\D)$ is an affine space over $H_{n+1}[\Hom_{\D}(F(X),F(Y))]$, in which case it is noncanonically isomorphic to $H_{n+1}[\Hom_{\D}(F(X),F(Y))]$, or it is empty. 
\end{rem}
It is easy to see that $\kappa_v$ only depends, up to a natural isomorphism, on the homology class of $v$. 

Let $\ground\langle v\rangle$ be the category with two objects and a one-dimensional space of morphisms of degree $n$ between them (here $v$ stands for a basis vector for this space). Let $\ground\langle v, w\rangle$ be the (dg) category with two objects and two-dimensional space of morphisms spanned by $v$ and $w$, in degrees $n$ and $n+1$, respectively, with the differential $d(w)=v$. There is an obvious functor $\ground\langle v\rangle\to \ground\langle v, w\rangle$ that is identical on objects and on the morphism $v$; note that $\ground\langle v, w\rangle$ is cofibrant over $\ground\langle v\rangle$. Similarly there is a functor $\ground\langle v\rangle \to \C$, sending $v$ to the specified morphism in $\C$.  This allows us to form the dg category $\C/v:=\C\coprod_{\ground\langle v\rangle}\ground\langle v,w\rangle.$ It is clear that $\C/v$ is supplied with a dg functor from $\C$ and the quasi-equivalence class of $\C/v$ as an object in $\C\downarrow\dgcat$ depends only on the homology class of $v$. We say that $\C/v$ is obtained from $\C$ by killing (the homology class of) the morphism $v$. Note that $\ground\langle v, w\rangle$ is quasi-equivalent to the coproduct $\ground\coprod \ground$ in $\dgcat$ so that $\C/v\simeq \C\coprod_{\ground[v]}^{\mathbb{L}}(\ground\coprod \ground)$; here the superscript $\mathbb L$ indicates the homotopy (derived) pushout of dg categories.

\begin{prop}\label{prop:killcat} Given a dg category $\C$, a closed morphism $v\in\Hom(\C)$ and a dg category $\D$ supplied with a dg functor $\C\to\D$ there is a natural isomorphism  of sets
\[
\kappa_v(\D)\cong [\C/v,\D]_{\C\downarrow\dgcat}
\]
where the right hand side above refers to homotopy classes of maps in $\C\downarrow\dgcat$. In other words, the functor $\kappa_v$ is represented up to homotopy by $\C/v$.
\end{prop}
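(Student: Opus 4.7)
The plan is to match strict dg functors $\C/v\to\D$ with cycles, and then identify left homotopies between them with boundaries, using the pushout presentation of $\C/v$ together with the relative cylinder from Section~2. For the strict part, $\C/v=\C\coprod_{\ground\langle v\rangle}\ground\langle v,w\rangle$ is a pushout, so its universal property identifies a dg functor $G:\C/v\to\D$ in $\C\downarrow\dgcat$ with a dg functor $\ground\langle v,w\rangle\to\D$ whose restriction along $\ground\langle v\rangle\to\ground\langle v,w\rangle$ is the composite $\ground\langle v\rangle\to\C\xrightarrow{F}\D$. Such a $G$ corresponds bijectively to a choice of element $w_G\in\Hom_\D(F(X),F(Y))_{n+1}$ satisfying $d(w_G)=F(v)$, so strict maps $\C/v\to\D$ in $\C\downarrow\dgcat$ are in natural bijection with the numerator $\{w:d(w)=F(v)\}$ of $\kappa_v(\D)$.

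To pass to homotopy classes, note that $\C/v$ is freely generated over $\C$ by the single morphism $w$ of weight one, with $dw=v\in\C$ of weight zero, so $\C/v$ is cofibrant over $\C$ in the sense of Section~2. Hence $C_\C(\C/v)$ is a good cylinder object for $\C/v$ in $\C\downarrow\dgcat$. Unwinding the construction, $C_\C(\C/v)$ is freely generated over $\C$ by two copies $w^1,w^2$ of $w$ together with an extra generator $\bar{w}$ of degree $n+2$, and $d(\bar{w})=w^1-w^2-f(dw)=w^1-w^2$, since $dw=v$ lies in $\C$ on which the $(i_1,i_2)$-derivation $f$ vanishes.

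A left homotopy in $\C\downarrow\dgcat$ between the strict maps corresponding to elements $w_0$ and $w_1$ is then a dg functor $C_\C(\C/v)\to\D$ restricting to these on the two copies of $\C/v$. Such a functor is determined by the image $h\in\Hom_\D(F(X),F(Y))_{n+2}$ of $\bar{w}$, and compatibility with the differential reduces precisely to $d(h)=w_0-w_1$. Thus two strict maps are left homotopic in $\C\downarrow\dgcat$ exactly when the corresponding elements differ by a boundary, yielding the bijection $[\C/v,\D]_{\C\downarrow\dgcat}\cong\kappa_v(\D)$, which is manifestly natural in $\D$. The only delicate point is to confirm that left homotopy through the good cylinder $C_\C(\C/v)$ faithfully captures the intended homotopy relation in the undercategory; once $\C/v$ is cofibrant over $\C$ and $C_\C(\C/v)$ is a good cylinder object, as produced by Section~2, this is standard model-categorical input.
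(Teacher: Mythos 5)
Your proposal is correct and follows essentially the same route as the paper: identify strict maps $\C/v\to\D$ under $\C$ with elements $w$ satisfying $d(w)=F(v)$ via the pushout presentation, then use the relative cylinder $C_\C(\C/v)$ from Section~2 (whose extra generator $\bar w$ of degree $n+2$ has $d(\bar w)=w^1-w^2$) to show that homotopy of such maps corresponds exactly to the two cycles differing by a boundary. You merely spell out details the paper leaves implicit, such as the vanishing of the derivation $f$ on $\C$ and the standard fact that left homotopy through a fixed good cylinder computes the homotopy relation for a cofibrant source and fibrant target.
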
 
\begin{proof}
		A dg functor $\C/v\to\D$ in the undercategory of $\C$ is equivalent to choosing a morphism $w\in \Hom(D)$ for which $d(w)=F(v)$. The cylinder object $C_{\C}(\C/v)$ has the same objects as $\C$, freely generated over $\C$ by morphisms $w$, $w'$ and $s$, with $d(w)=v$, $d(w')=v$ and $d(s)=w-w'$.  It is then clear that two maps $\C/v\to\D$ in $\C\downarrow\dgcat$ corresponding to morphisms $w$ and $w'$
		are homotopic if and only $w$ and $w'$ differ by a boundary in $\Hom(\D)$. 
\end{proof}	
\begin{cor}
	The functor $\kappa_v$  lifts to a functor on the homotopy category of  $\C\downarrow\dgcat\to\operatorname{Sets}$.
\end{cor}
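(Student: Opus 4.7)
The strategy is to read the corollary off Proposition \ref{prop:killcat}. Under the natural bijection $\kappa_v(\D)\cong [\C/v,\D]_{\C\downarrow\dgcat}$ supplied there, the corollary reduces to the assertion that $\D\mapsto [\C/v,\D]_{\C\downarrow\dgcat}$ factors through the homotopy category $\operatorname{Ho}(\C\downarrow\dgcat)$. This is a standard feature of any model category provided the source object is cofibrant (all objects being fibrant in $\dgcat$), so the single verification required is that $\C/v$ is cofibrant in $\C\downarrow\dgcat$, i.e.\ that the structure map $\C\to\C/v$ is a cofibration in Tabuada's model structure on $\dgcat$.

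To see this, recall that $\C/v=\C\coprod_{\ground\langle v\rangle}\ground\langle v,w\rangle$ is a pushout along the inclusion $\ground\langle v\rangle\hookrightarrow \ground\langle v,w\rangle$. The latter freely adjoins a single generator $w$ with $d(w)=v$, and was already identified in the discussion preceding Proposition \ref{prop:killcat} as making $\ground\langle v,w\rangle$ cofibrant over $\ground\langle v\rangle$. Since cofibrations are closed under pushout, $\C\to\C/v$ is a cofibration and $\C/v$ is cofibrant in $\C\downarrow\dgcat$.

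It then remains only to observe that the bijection of Proposition \ref{prop:killcat} is natural in $\D$, so that it upgrades to an isomorphism of functors on the homotopy category. Naturality is immediate from the formula: a morphism $g:\D\to\D'$ in $\C\downarrow\dgcat$ sends a bounding element $w$ for $F(v)$ in $\D$ to the bounding element $g(w)$ for $F(v)$ in $\D'$, and this assignment respects the boundary equivalence relation on both sides. I do not anticipate any genuine obstacle; the whole argument amounts to combining representability of $\kappa_v$ by the cofibrant object $\C/v$ with the elementary model-categorical fact that homotopy classes out of a cofibrant source are invariant under weak equivalences in the target.
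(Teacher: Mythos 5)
Your proposal is correct and follows the same route the paper intends: the corollary is stated as an immediate consequence of Proposition \ref{prop:killcat} (the paper gives no separate proof), and your argument simply makes explicit the standard facts involved, namely that $\ground\langle v\rangle\to\ground\langle v,w\rangle$ is a (generating) cofibration so $\C/v$ is cofibrant in $\C\downarrow\dgcat$, that all objects are fibrant, and that the bijection is natural in $\D$.
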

The killing construction has an analogue in the category $\DGA$, which we will now sketch. Let $C$ be a dg algebra and $v\in C_n$ be an $n$-cycle in $A$. Regarding $C$ as a dg category with one object, we can form the killing construction $C/v$. The definition of the functor $\kappa_v$ is likewise similar.
\begin{defi} Given a dg algebra $C$ supplied with a dg map $f:C\to D$, set
	\[
	\kappa_v(C)=\{w\in C_{n+1}:d(w)=f(v)\}/B_{n+1}(D).
	\] 
\end{defi}
As before, it is easy to see that $\kappa_v$ does not depend, up to a natural isomorphism, on the choice of $v$ inside its homology class. The following result holds (which implies, as before, that $\kappa_v$ lifts to the homotopy category of $C\downarrow\DGA$).
\begin{prop}\label{prop:killalg} Given a dg algebra $A$, a cycle $v\in C_n$ and a dg algebra $D$ supplied with a dg map $C\to D$ there is a natural isomorphism  of sets
	\[
	\kappa_v(D)\cong [C/v,D]_{C\downarrow\DGA}
	\]
	where the right hand side above refers to homotopy classes of maps in $C\downarrow\DGA$. In other words, the functor $\kappa_v$ is represented up to homotopy by $C/v$.
\end{prop}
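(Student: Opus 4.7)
The strategy is a direct transcription of the proof of Proposition \ref{prop:killcat} to the one-object setting, exploiting the fact that $\DGA$ is the full subcategory of $\dgcat$ spanned by dg categories with a single object, and that the relative cylinder construction of Section 2 restricts correctly to this subcategory.

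First I will unwind $C/v$ as the pushout in $\DGA$
\[
C/v = C\coprod_{\ground\langle v\rangle}\ground\langle v,w\rangle,
\]
where $\ground\langle v\rangle$ is the free graded algebra on a generator $v$ in degree $n$ with zero differential, and $\ground\langle v,w\rangle$ adjoins a generator $w$ in degree $n+1$ with $d(w)=v$. Since $\ground\langle v,w\rangle$ is cofibrant over $\ground\langle v\rangle$, the ordinary pushout is a homotopy pushout. The universal property then says that dg algebra maps $C/v\to D$ in $C\downarrow\DGA$ are in natural bijection with elements $w\in D_{n+1}$ satisfying $d(w)=f(v)$. This sets up the map from $\kappa_v(D)$ (before quotienting by boundaries) to $\Hom_{C\downarrow\DGA}(C/v,D)$.

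Second I will pass to homotopy classes via the relative Baues--Lemaire cylinder. Since $C/v$ is cofibrant over $C$ in the sense of Section 2 (a single generator $w$ with boundary already in $C$), the cylinder $C_{C}(C/v)$ is a good cylinder object in $C\downarrow\DGA$, and it is freely generated over $C$ by elements $w_1,w_2$ in degree $n+1$ with $d(w_i)=v$, together with $\bar{w}$ in degree $n+2$ with $d(\bar{w})=w_1-w_2$. A left homotopy between the maps $\phi_{w_1},\phi_{w_2}:C/v\to D$ corresponding to elements $w_1,w_2$ is then exactly a dg algebra map $C_{C}(C/v)\to D$ under $C$ sending $w_i\mapsto w_i$; by freeness this amounts to the choice of an element $s\in D_{n+2}$ with $d(s)=w_1-w_2$, which exists precisely when $w_1-w_2$ is a boundary. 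Quotienting by this relation yields the claimed isomorphism $\kappa_v(D)\cong [C/v,D]_{C\downarrow\DGA}$, and naturality in $D$ is transparent.

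The only nontrivial point is verifying that the cylinder object constructed in Section 2 genuinely serves as a cylinder inside $\DGA$, not merely inside the ambient $\dgcat$. This follows from the Proposition of Section 2 together with the observation that the inclusion $\DGA\hookrightarrow\dgcat$ sends coproducts over $\ground\langle v\rangle$ of one-object dg categories to one-object dg categories, and that the quasi-equivalence $C_{C}(C/v)\to C/v$ is in fact a quasi-isomorphism of dg algebras. With this formal check in hand, the argument is essentially identical to that of Proposition \ref{prop:killcat}.
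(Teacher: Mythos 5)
Your proof is correct, but it takes a different route from the paper's. The paper deduces Proposition \ref{prop:killalg} formally from Proposition \ref{prop:killcat}: it regards $\DGA$ as a full subcategory of $\ground\downarrow\dgcat$, observes that although this inclusion is not compatible with the model structures (not every surjection of dg algebras is a fibration of one-object dg categories), the two localizations invert the same maps, so the inclusion induces a full embedding of homotopy categories, and then the categorical statement restricts. You instead rerun the cylinder argument entirely inside $\DGA$: identify $C/v$ as the free extension $C\langle w\rangle$ with $d(w)=v$, so that maps under $C$ are elements $w\in D_{n+1}$ with $d(w)=f(v)$, and then use the relative Baues--Lemaire cylinder $C\langle w_1,w_2,\bar w\rangle$ with $d(\bar w)=w_1-w_2$ to see that two such maps are homotopic precisely when $w_1-w_2$ is a boundary. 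This is exactly the alternative the authors flag in the remark immediately following their proof (``it is also possible to carry out the proof solely inside $\DGA$''). What your route buys is that you avoid the somewhat delicate claim that the non-Quillen inclusion $\DGA\hookrightarrow\ground\downarrow\dgcat$ induces a full embedding on homotopy categories; the price is that you must verify that the Section~2 cylinder really is a cylinder object in $C\downarrow\DGA$ (cofibrancy of $C/v$ over $C$, fibrancy of $D$, and the quasi-isomorphism $C_C(C/v)\to C/v$), which you do address and which is in substance the original Baues--Lemaire result cited as \cite{BL}. Both arguments are sound.
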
 
\begin{proof}
Note that the category $\DGA$ of dg algebras can be viewed as a full subcategory of $\ground_{}\downarrow\dgcat$ where $\ground$  stands for the dg category with one object and $\ground$ worth of morphisms. This inclusion is \emph{not} compatible with model structures on $\DGA$ and $\dgcat$ since not every surjective map between dg algebras is a fibration of the corresponding dg categories. Nevertheless, viewing the homotopy category as a localization of $\DGA$ either on its own or inside $\ground\downarrow\dgcat$, we see that the same maps are being inverted and therefore the above inclusion induces a full embedding on the corresponding homotopy categories. 

With this, the desired result follows directly from Proposition \ref{prop:killcat}.
\end{proof}
\begin{rem}
	It is also possible to carry out the proof of Proposition \ref{prop:killalg} solely inside $\DGA$ where similar arguments would apply.
\end{rem}
\section{One-sided inversion in dg categories}
We will describe the procedure of one-sided (left or right) inversion in a dg category in a homotopy invariant fashion. The arguments follow those in the previous section and are only slightly more complicated (owing to the more complicated nature of the relevant relative cylinder).

Let $\C$ be a dg category, $O_1,O_2$ be objects of $\C$ and $v\in H_n(\Hom(O_1,O_2))$. We will construct a functor
\[ \rho_v:\C\downarrow\dgcat\to\operatorname{Sets}\]
from the undercategory of $\C$ to sets.
\begin{defi} Given a dg category $\D$ supplied with a dg functor $F:\C\to\D$, set
	\[
	\rho_v(\D)=\{w\in H_{-n}\Hom_{\D}(F(O_2),F(O_1)):F(v)F(w)=1\in H_0\End_{\D}(F(O_2))\}
	\] 
\end{defi}
\begin{rem}
The set $\rho_v(\D)$ is either empty or it is noncanonically isomorphic to the set of right zero-divisors of $F(v)$ in $H_n\Hom_{\D}(F(Y),F(X))$ 
(since any two elements in $\rho_v(\D)$ 
differ by a right zero divisor of $F(v)$). Since homotopic functors between dg categories 
induce the same maps on the graded homology categories, it follows that the functor $\rho_v$ lifts to the homotopy category of $\C\downarrow\dgcat$.
\end{rem}
Recall that we denoted by $\ground\langle v\rangle$ the category with two objects $O_1$ and $O_2$ and having its morphisms generated by a single map $v:O_1\to O_2$ between them, with $|v|=n$. Let $Q_v$ stand for the category with the same two objects and generated by three morphisms $v:O_1\to O_2$, $w:O_2\to O_1$ and $u:O_2\to O_2$, with the differential $d(u)=vw-1$ and $d(v)=d(w)=0$. The degrees of $v,w$ and $u$ are $n,-n$ and $1$ respectively. 

Imposing the relation $u=0$ in $Q_v$ we obtain a category that we denote by $\ground\langle v^-\rangle$. The following result shows that $Q_v$ is a resolution of $\ground\langle v^-\rangle$.
\begin{prop}\label{prop: Qresolution}
	The quotient map 
	$Q_v\to \ground\langle v^-\rangle$ is a quasi-equivalence. 
\end{prop}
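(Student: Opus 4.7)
Since the quotient map is the identity on objects, essential surjectivity on homology categories is automatic, and it remains to check that the induced map on each $\Hom$ complex is a quasi-isomorphism. My plan is to reduce all four such verifications to a single computation: that the dg endomorphism algebra $T := \End_{Q_v}(O_2)$ is quasi-isomorphic to $\ground$.

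First I would describe the $\Hom$ complexes of $Q_v$ using the underlying path structure. Since $v, w, u$ freely generate $Q_v$ as a graded category (with no relations imposed before the differential), a direct accounting of paths gives $T = \ground\langle u, vw\rangle$, the free graded associative $\ground$-algebra on $u$ (in degree $1$) and $vw$ (in degree $0$). The remaining $\Hom$ complexes are then $\Hom_{Q_v}(O_1, O_2) = Tv$, $\Hom_{Q_v}(O_2, O_1) = wT$, and $\End_{Q_v}(O_1) = \ground \oplus wTv$, the summand $\ground$ accounting for the identity at $O_1$. Because $v$ and $w$ are cycles, each of the latter three is isomorphic as a chain complex to a (shifted) copy of $T$, possibly augmented by a copy of $\ground$.

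The key technical step is to show $H_*(T) \cong \ground$, concentrated in degree $0$. I would pass to new generators $u$ and $b := vw - 1$, so that $T = \ground\langle u, b\rangle$ with $d(u) = b$ and $d(b) = 0$. Then $T$ is the tensor algebra on the two-term complex $V = \ground\cdot u \oplus \ground\cdot b$, which is contractible because its differential $V_1 \to V_0$ is a $\ground$-linear isomorphism. A choice of contracting homotopy on $V$ tensors to a contracting homotopy on each $V^{\otimes n}$ with $n \geq 1$, so that $H_*(T) = H_*(V^{\otimes 0}) = \ground$. This standard fact about tensor algebras of contractible complexes is the main (and rather mild) ingredient.

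Finally, combining these computations yields $H_*\End_{Q_v}(O_2) = \ground$, $H_*\Hom_{Q_v}(O_1,O_2) = \ground\cdot v$, $H_*\Hom_{Q_v}(O_2,O_1) = \ground\cdot w$, and $H_*\End_{Q_v}(O_1) = \ground \oplus \ground\cdot wv$. On the target side, in $\ground\langle v^-\rangle$ the relation $vw = 1$ forces $(wv)^2 = w(vw)v = wv$, making $wv$ an idempotent, so $\End_{\ground\langle v^-\rangle}(O_1)$ is again two-dimensional, while the three remaining $\Hom$ spaces are visibly one-dimensional. The quotient map sends $1, v, w, wv$ to their namesakes, giving the required isomorphisms on homology and hence a quasi-equivalence.
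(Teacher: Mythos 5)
Your argument is correct, and it reaches the same answer for $H_*(Q_v)$ (spanned by $1_{O_1},1_{O_2},v,w,wv$) by a genuinely different route. The paper works with the category algebra $\ground Q\langle u\rangle$ over $\ground\times\ground$, replaces the differential $d(u)=vw-1_{O_2}$ by $d'(u)=vw$ (arguing the homology is unchanged as a graded module because $\ground\times\ground$ splits off as a bimodule), and then identifies $(\ground Q\langle u\rangle,d')$ as the bimodule cobar-construction of the Koszul algebra $\ground Q/(vw)$, quoting Koszul duality for the homology. Your change of free generators $vw\mapsto vw-1$ is the same ``absorb the unit'' manoeuvre in a cleaner guise (it is an honest dg isomorphism rather than a separate comparison of two differentials), and your key lemma --- that the tensor algebra on the acyclic two-term complex $\ground\cdot u\xrightarrow{\ \sim\ }\ground\cdot b$ has homology $\ground$ --- replaces the appeal to Koszulity by an elementary contracting-homotopy computation; the decomposition of the remaining $\Hom$ complexes as free $T$-modules $Tv$, $wT$, $\ground\oplus wTv$ then does the rest. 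What you gain is a self-contained proof valid over any commutative $\ground$ with no input from Koszul duality; what the paper's version buys is brevity for a reader who knows the cobar construction, and a template that generalizes to other quivers with quadratic relations. One half-sentence you should add: to conclude that the map on homology is injective you need the five elements $1_{O_1},1_{O_2},v,w,wv$ to be linearly independent in $\ground\langle v^-\rangle$, not merely spanning; your idempotent computation only bounds $\End(O_1)$ above by dimension $2$. This is immediate from the diamond lemma for the rewriting rule $vw\to 1$, or by exhibiting the representation $O_1\mapsto\ground^2$, $O_2\mapsto\ground$ with $v$ the projection and $w$ the splitting, where $wv\neq 1_{O_1}$.
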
	 
\begin{proof}
	Let $Q$ be the quiver with two vertices $O_1, O_2$ and two arrows $v:O_1\to O_2, w:O_2\to O_1$. Let $Q\langle u\rangle$ be the same quiver with the additional arrow $u:O_2\to O_2$. Denote by $1_{O_1}$ and $1_{O_2}$ idempotents associated with vertices $O_1$ and $O_2$. We consider the algebra $\ground Q/(vw-1_{O_2})$ and the  dg algebra $\ground Q\langle u\rangle$ with $d(u)=vw-1_{O_{2}}$. Note that $\ground Q/(vw-1_{O_2})$ is the category algebra of $\ground\langle v^-\rangle$ and  $\ground Q\langle u\rangle$ with the indicated differential, is the category algebra of $Q_v$. The functor $Q_v\to \ground\langle v^-\rangle$, being bijective on objects, induces the quotient map $j:\ground Q\langle u\rangle\to \ground\langle v^-\rangle$ with $d(u)=vw-1_{O_{2}}$. The desired statement is equivalent to $j$ being a quasi-isomorphism. 
	
Let us consider the modified differential on the complex 
$\ground Q\langle u \rangle$ given by $d'(u)=vw$. Since the commutative semisimple algebra $\ground\times\ground$ splits off 
$\ground Q\langle u \rangle$
 as a $\ground\times\ground$-bimodule (though not as a dg algebra), the homology with respect to the modified differential is unchanged as a graded vector space. But 
 $(\ground Q\langle u \rangle,d')$
  is the bimodule cobar-construction of the graded Koszul algebra $\ground Q/(vw)$. The homology of this cobar-construction is, therefore, spanned over $\ground\times \ground$ by the elements $v,w$ and $vw$. Therefore the homology of 
  $kQ\langle u \rangle$
  with the old differential $d$ is spanned over $\ground\times \ground$ by the same elements. The desired claim follows.
\end{proof}	

Clearly, $\ground\langle v\rangle$ is a subcategory of $Q_v$ and $Q_v$ is cofibrant over $\ground \langle v\rangle$. This allows us to form the dg category $R_v(\C):=\C\coprod_{\ground\langle v\rangle}Q_v$. 
\begin{cor}\label{cor:derivedpushout}
	There is a weak equivalence $R_v(\C)\simeq \C\coprod^{\mathbb L}_{\ground\langle v\rangle}\ground[v^-]$.
\end{cor}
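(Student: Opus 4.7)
The statement identifies the ordinary pushout $R_v(\C)=\C\coprod_{\ground\langle v\rangle}Q_v$ with the homotopy pushout of $\C\leftarrow\ground\langle v\rangle\to \ground\langle v^-\rangle$. The plan is to argue that the map $\ground\langle v\rangle\to Q_v$ is already a cofibrant replacement of $\ground\langle v\rangle\to \ground\langle v^-\rangle$ in the category of objects under $\ground\langle v\rangle$, so that the strict pushout along it computes the derived pushout.

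First I would recall the two inputs already established just before the corollary: (i) by Proposition \ref{prop: Qresolution}, the map $Q_v\to\ground\langle v^-\rangle$ (quotienting by $u$) is a quasi-equivalence, and (ii) $Q_v$ is cofibrant over $\ground\langle v\rangle$, since as a nondifferential category it is freely generated over $\ground\langle v\rangle$ by the morphisms $w$ and $u$, with $d(u)=vw-1_{O_2}$ lying in the subcategory generated by $v,w$ of strictly smaller weight (taking weight zero for $\ground\langle v\rangle$, weight $1$ for $w$, and weight $2$ for $u$). Hence $\ground\langle v\rangle\hookrightarrow Q_v$ is a cofibration in $\dgcat$ whose target is quasi-equivalent to $\ground\langle v^-\rangle$ relative to $\ground\langle v\rangle$.

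Next, standard model-categorical arguments show that the pushout of a cofibration along any map of flat dg categories preserves quasi-equivalences in the other variable; equivalently, pushing out $\C\leftarrow\ground\langle v\rangle\to Q_v$ along the cofibration computes the homotopy pushout. Our standing flatness assumption on $\C$ and the fact that $\ground\langle v\rangle$ and $Q_v$ are free (hence flat) guarantee that we are in the situation where the strict pushout is homotopically meaningful. Therefore
\[
R_v(\C)=\C\coprod_{\ground\langle v\rangle}Q_v\;\simeq\;\C\coprod^{\mathbb L}_{\ground\langle v\rangle}Q_v\;\simeq\;\C\coprod^{\mathbb L}_{\ground\langle v\rangle}\ground\langle v^-\rangle,
\]
where the last weak equivalence uses the invariance of the homotopy pushout under the quasi-equivalence $Q_v\to\ground\langle v^-\rangle$ of Proposition \ref{prop: Qresolution}.

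The only real subtlety is justifying that the strict pushout along $\ground\langle v\rangle\to Q_v$ really does compute the derived pushout — this is the step where flatness of $\C$ over $\ground$ and the cofibrancy of $Q_v$ over $\ground\langle v\rangle$ both matter. Once that is in hand, the corollary is immediate from Proposition \ref{prop: Qresolution}.
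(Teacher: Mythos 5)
Your proof is correct and follows essentially the same route as the paper, which simply declares the corollary ``immediate from Proposition \ref{prop: Qresolution}'': the intended argument is exactly yours, namely that $\ground\langle v\rangle\to Q_v$ is a cofibration with $Q_v$ a resolution of $\ground\langle v^-\rangle$, so the strict pushout defining $R_v(\C)$ already computes the homotopy pushout. You have merely made explicit the cofibrancy/flatness bookkeeping that the authors leave implicit (and which their standing flatness convention is there to guarantee).
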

\begin{proof}
	This is immediate from Proposition \ref{prop: Qresolution}.
\end{proof}	

It is clear that $R_v(\C)$ is supplied with a dg functor from $\C$, that is to say it is an object in $\C\downarrow\dgcat$.

The following result holds.
\begin{prop}\label{prop:rightsided} Given a dg category $\C$, a homology class $v\in H_n\Hom(\C)$ and a dg category $\D$ supplied with a dg functor $\C\to\D$ there is a natural isomorphism  of sets
	\[
	\rho_v(\D)\cong [R_v(\C),\D]_{\C\downarrow\dgcat}
	\]
where the right hand side above refers to homotopy classes of maps in $\C\downarrow\dgcat$. In other words, the functor $\rho_v$ is represented up to homotopy by $R_v$.
\end{prop}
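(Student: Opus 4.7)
The strategy parallels the proof of Proposition~\ref{prop:killcat} but now with two generators of $R_v(\C)$ over $\C$ instead of one, which introduces a new subtlety that will be the main obstacle. To begin, by the universal property of the pushout $R_v(\C)=\C\coprod_{\ground\langle v\rangle}Q_v$, a dg functor $R_v(\C)\to\D$ in $\C\downarrow\dgcat$ amounts to a pair $(w,u)$ consisting of a cycle $w\in\Hom_{\D}(F(O_2),F(O_1))_{-n}$ and an element $u\in\End_{\D}(F(O_2))_1$ with $d(u)=F(v)w-1$. The assignment $(w,u)\mapsto[w]$ takes values in $\rho_v(\D)$ and serves as the candidate bijection.

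The category $R_v(\C)$ is cofibrant over $\C$ when $w$ and $u$ are assigned weights $1$ and $2$ respectively, so the $\C$-relative cylinder of Section~2 is at our disposal. Writing $C_\C(R_v(\C))$ for this cylinder, its defining derivation yields (up to signs from the Leibniz rule) $d(\bar{w})=w^1-w^2$ and $d(\bar{u})=u^1-u^2-F(v)\bar{w}$. Thus two pairs $(w_1,u_1)$ and $(w_2,u_2)$ define homotopic dg functors precisely when there exist $\bar{W}\in\Hom_\D(F(O_2),F(O_1))_{-n+1}$ and $\bar{U}\in\End_\D(F(O_2))_2$ satisfying $d(\bar{W})=w_1-w_2$ and $d(\bar{U})=u_1-u_2-F(v)\bar{W}$. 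Consequently homotopic pairs share the same class $[w]$, so the candidate map is well-defined on homotopy classes, and surjectivity is immediate: any cycle representative $w$ of a class in $\rho_v(\D)$ makes $F(v)w-1$ a boundary, and choosing any preimage $u$ gives a lift.

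The heart of the argument, and the main obstacle, is injectivity. Suppose $[w_1]=[w_2]$ and fix any $\bar{W}_0$ with $d(\bar{W}_0)=w_1-w_2$. Then $z:=u_1-u_2-F(v)\bar{W}_0$ is a cycle in $\End_\D(F(O_2))_1$, and replacing $\bar{W}_0$ by $\bar{W}_0+c$ for a cycle $c$ alters $[z]$ by $-[F(v)c]$. The task is therefore to show that left multiplication by $F(v)$ induces a surjection $H_{-n+1}\Hom_\D(F(O_2),F(O_1))\to H_1\End_\D(F(O_2))$, which would allow us to kill $[z]$. The decisive observation is that the very existence of the pair $(w_2,u_2)$ forces this surjectivity: for any cycle $\zeta\in\End_\D(F(O_2))_1$ the Leibniz rule gives $d(u_2\zeta)=F(v)w_2\zeta-\zeta$, exhibiting $w_2\zeta$ as a cycle with $[F(v)(w_2\zeta)]=[\zeta]$. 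Taking $\zeta$ to represent $[z]$ produces the cycle $c$ needed to adjust $\bar{W}_0$, after which $u_1-u_2-F(v)\bar{W}$ is a boundary and the required $\bar{U}$ exists.
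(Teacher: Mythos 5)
Your proof is correct and follows essentially the same route as the paper: a map $R_v(\C)\to\D$ under $\C$ is a pair $(w,u)$, homotopies are analyzed via the relative cylinder of Section~2, and injectivity rests on the observation that $d(u_2\zeta)=F(v)w_2\zeta-\zeta$ forces left multiplication by $[F(v)]$ to be surjective on homology, so the obstruction class can be absorbed into $\bar{W}$. Your explicit isolation of that surjectivity is in fact a careful rendering of the explicit homotopy the paper writes down (whose printed formulas for $H(s_1)$ and $H(s_2)$ do not typecheck as stated and clearly intend the corrections $w(F_1(u)-F_2(u))$ and $u(F_1(u)-F_2(u))$ that your argument supplies), so nothing is missing beyond the sign bookkeeping you already flagged.
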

\begin{proof}
	Let $F:R_v(\C) \to\D$ be a map in $\C\downarrow\dgcat$. As such, it is determined by the images of the morphisms $w$ and $u$ in $\D$. The morphism $F(w)$ is a cycle in $\Hom_{\D}(F(X),F(Y))$ and $d(F(u))=F(v)F(w)-1$.  Therefore, the functor $F$ determines an element in $\rho_v(\D)$ and every element in $\rho_v(\D)$ comes from a functor $R_v\to \D$.
	
Let us suppose that two maps $F_1,F_2:R_v \to\D$ are homotopic in $\C\downarrow\dgcat$; so there is a dg functor $
H:C_{\C}(R_v)\to\D$ implementing a homotopy between $F_1$ and $F_2$. The dg category $
C_{\C}(R_v)$ has the same objects as $R_v$, and it is freely generated over $R_v$ by  morphisms $w':O_2\to O_1$, $u':O_2\to O_2$, $s_1:O_2\to O_1$ and $s_2:O_2\to O_2$. The non-zero differentials on these additional generators are given by the formulas:
\begin{align*}
d(u') & =  vw'-1\\
d(s_1) & =  w-w'\\
d(s_2) & = u-u'+vs_1.
\end{align*}
It follows that the $F_1(w),F_2(w)\in\Hom(\D)$ are homologous cycles since $$F_1(w)-F_2(w')=H(w)-H(w')=dH(s_1).$$ 
So the condition that $F_1$ and $F_2$ are homotopic implies that the homology classes  of right inverses to $F_1(w)$ and $F_2(w)$ coincide. In other words, we have proved that there is a well-defined map 
$[R_v,\D]_{\C\downarrow\dgcat}\to\rho_v(\D)$ and that it is surjective.

To show that it is injective, suppose that we have two maps $F_1$ and $F_2$ as above, such that $F_1(w),F_2(w)\in\Hom(\D)$ are homologous cycles.  We can assume that $F_1(w)$ and $F_2(w)$ are in fact equal; indeed if this is not the case, then replace $F_1$ with a homotopic map $F_1'$ where the corresponding homotopy has $s_2$-component zero and for which $F_1'(w)=F_2(w)$. Then the homology class of $F_1(u)-F_2(u)$ is well-defined.

Let us define a homotopy $H:C_{\C}(R_v)\to \mathcal D$ by setting $H(w)=F_1(w)$, $H(w')=F_2(w')$, $H(u)=F_1(u)$, $H(u')=F_2(u)$,  $H(s_1)=F_1(u)-F_2(u)$ and $H(s_2)=v(F_1(u)-F_2(u))$. Then $H$ is a homotopy between $F_1$ and $F_2$, proving the required injectivity. 
\end{proof}	

There is an analogue of this result in the category $\DGA$. Let $C$ be a dg algebra and $v\in H_n(C)$. Regarding $C$ as a dg category, we can form the corresponding one-sided derived localization $R_v(C)$. The definition of $\rho_v$ is likewise similar.

\begin{defi} Given a dg algebra $D$ supplied with a dg map $F:\C\to\D$, set
	\[
	\rho_v(D)=\{w\in H_{-n}(D):F(v)F(w)=1\in H_0(D).\}
	\] 
\end{defi}
Then the following result holds, which is a direct corollary of Proposition \ref{prop:rightsided}.
\begin{prop}\label{prop:rightsidedalg} Given a dg algebra $C$, a homology class $v\in H_n(C)$ and a dg algebra $D$ supplied with a dg map $C\to D$, there is a natural isomorphism  of sets
	\[
	\rho_v(D)\cong [R_v,\D]_{C\downarrow\DGA}
	\]
	where the right hand side above refers to homotopy classes of maps in $C\downarrow\DGA$. In other words, the functor $\rho_v$ is represented up to homotopy by $R_v$.
\end{prop}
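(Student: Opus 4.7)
The plan is to reduce Proposition~\ref{prop:rightsidedalg} to Proposition~\ref{prop:rightsided} via the full embedding of $\DGA$ into $\ground\downarrow\dgcat$ as one-object dg categories, exactly as Proposition~\ref{prop:killalg} was deduced from Proposition~\ref{prop:killcat}. As already noted in the proof of Proposition~\ref{prop:killalg}, the inclusion $\DGA \hookrightarrow \ground\downarrow\dgcat$ is not compatible with the model structures, but the classes of maps that are inverted to form the two homotopy categories coincide on the one-object subcategory, so the inclusion induces a full embedding on homotopy categories.

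First I would check that the categorical construction $R_v(C) := C \coprod_{\ground\langle v\rangle} Q_v$, applied to a dg algebra $C$ viewed as a one-object dg category, agrees with the expected dg algebra $C\langle w,u\rangle$ equipped with the differential $d(u) = vw - 1$. Because $v$ is an endomorphism of the unique object of $C$, both $O_1$ and $O_2$ of $\ground\langle v\rangle$ are sent to that object, and the pushout collapses the two objects of $Q_v$ into one, producing a one-object dg category whose endomorphism dg algebra is precisely $C\langle w, u\rangle$ with the required differential.

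Next I would verify that the two definitions of $\rho_v$ coincide under this identification. In the categorical setting $\rho_v(\D)$ consists of classes $w \in H_{-n}\Hom_{\D}(F(O_2), F(O_1))$ with $F(v)F(w) = 1$ in $H_0\End_{\D}(F(O_2))$; when $\D$ is a one-object dg category with endomorphism algebra $D$, the hom-complex $\Hom_{\D}(F(O_2), F(O_1))$ is simply $D$, and the condition reduces verbatim to the algebraic definition $F(v)F(w) = 1$ in $H_0(D)$.

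With these identifications, Proposition~\ref{prop:rightsided} supplies a natural bijection between $\rho_v(D)$ and $[R_v(C), D]_{\ground\downarrow\dgcat}$, which the homotopy-category embedding then identifies with $[R_v(C), D]_{C\downarrow\DGA}$, yielding the claim. The only point requiring care, and the place an unforeseen difficulty would most likely appear, is the identification of homotopy classes across the embedding; but this is exactly the compatibility already established in the proof of Proposition~\ref{prop:killalg}, so no new argument is needed.
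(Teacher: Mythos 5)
Your proposal is correct and matches the paper's approach: the paper states this result as a direct corollary of Proposition \ref{prop:rightsided} via the embedding $\DGA\hookrightarrow\ground\downarrow\dgcat$ already used for Proposition \ref{prop:killalg}, and you supply exactly the verifications (collapse of the two objects in the pushout, agreement of the two definitions of $\rho_v$, transfer of homotopy classes) that make this precise. The only quibble is notational: the intermediate mapping set should be taken in the undercategory of $C$ viewed as a one-object dg category, i.e.\ $[R_v(C),\D]_{\C\downarrow\dgcat}$ with $\C$ the one-object category corresponding to $C$, rather than in $\ground\downarrow\dgcat$.
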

\begin{rem}
The construction of one-sided localization $R_v$ can be carried out entirely in the category $\DGA$. The analogue of $Q_v$ is played by the algebra $Q_v^{\text {alg}}$ which is free on generators $v, w$ and $u$ and having the differential $d(u)=vw-1$. As in the categorical case, it is
 a resolution of the algebra $\ground[v^-]^{\text {alg}}:=\ground\langle v, w\rangle/(vw=1)$. Therefore, for an dg algebra $A$ and its cycle $v$, we have 
 \[R_v(A)\cong A\coprod _{\ground[v]}Q_v^{\text{alg}}\simeq A\coprod^{\mathbb L} _{\ground[v]}\ground[v^-]^{\text {alg}}\]
 We omit the details.
\end{rem} 
\section{Two-sided inverses from one-sided ones}
In this section we apply one-sided localization in dg categories to obtain the ordinary (i.e. two-sided one). This recovers some of the results of \cite{Toen} and \cite{BCL}.

Recall that given a dg category $\C$ and a morphism $v\in \Hom_{\C}(v,w)$, we have defined the one-sided (more precisely, right-sided) localization $R_v(\C)$ as 
\[
R_v(\C):=\C\coprod_{\ground\langle v\rangle} Q_v\simeq \C\coprod^{\mathbb L}_{\ground\langle v\rangle}\ground\langle v^{-}\rangle,
\]
where $\ground\langle v\rangle$ is the category with two objects and a one-dimensional space of morphisms between them spanned by $v$, $\ground\langle v^{-}\rangle$ is the same category where $v$ has been freely right-inverted, and $Q_v$ is a cofibrant replacement of $\ground\langle v^{-}\rangle$.

Symmetrically, let us define the category $\ground \langle{}^{-}v\rangle$ generated over 
$\ground\langle v\rangle$ by a morphism $w'$ such that $w'v=1$. Arguing as 
in Proposition \ref{prop: Qresolution}, we construct a resolution 
$L_v$ of $\ground \langle{}^{-}v\rangle$ 
with generators $v, w', u'$ and the differential $d(u')=w'v-1$. This allows us to define the (derived) left localization $L_v(\C)$ of $\C$ at $v$ as
\[
L_v(\C):=\C\coprod_{\ground\langle v\rangle} L_v\simeq \C\coprod^{\mathbb L}_{\ground\langle v\rangle}\ground\langle {^-}v\rangle.
\]
We will construct a functor 
\[ \lambda_v:\C\downarrow\dgcat\to\operatorname{Sets}\]
from the undercategory of $\C$ to sets; it is a left-sided version of the functor $\rho_v$ considered earlier.
\begin{defi} Given a dg category $\D$ supplied with a dg functor $F:\C\to\D$, set
	\[
	\lambda_v(\D)=\{w\in H_{-n}\Hom_{\D}(F(O_2),F(O_1)):F(w)F(v)=1\in H_0\End_{\D}(F(O_1))\}
	\] 
\end{defi}
Then we have the following result whose proof is similar to that of Proposition \ref{prop:leftsided}.
\begin{prop}\label{prop:leftsided} Given a dg category $\C$, a homology class $v\in H_n\Hom(\C)$ and a dg category $\D$ supplied with a dg functor $\C\to\D$ there is a natural isomorphism  of sets
	\[
	\lambda_v(\D)\cong [L_v(\C),\D]_{\C\downarrow\dgcat}
	\]
	where the right hand side above refers to homotopy classes of maps in $\C\downarrow\dgcat$. In other words, the functor $\lambda_v$ is represented up to homotopy by $L_v$.
\end{prop}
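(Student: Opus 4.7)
The proof will follow the same structure as Proposition \ref{prop:rightsided}, with left and right multiplication interchanged throughout. First, I would observe that a dg functor $F:L_v(\C)\to \D$ under $\C$ is determined by a pair $(F(w'),F(u'))$ where $F(w')\in\Hom_{\D}(F(O_2),F(O_1))$ is a degree $-n$ cycle and $F(u')\in\End_{\D}(F(O_1))_1$ satisfies $dF(u')=F(w')F(v)-1$. Assigning to $F$ the homology class $[F(w')]$ yields a well-defined element of $\lambda_v(\D)$, and every element of $\lambda_v(\D)$ arises this way: given a cycle $w$ representing a left inverse of $[F(v)]$ in homology, one chooses any $u'$ with $du'=wF(v)-1$.

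Next, I would verify that this assignment descends to homotopy classes. The relative cylinder $C_{\C}(L_v(\C))$ produced in Section 2 is freely generated over two copies of $L_v(\C)$ by $s_1=\bar{w'}$ of degree $-n+1$ and $s_2=\bar{u'}$ of degree $2$. The differential on $s_1$ is $d(s_1)=w'^1-w'^2$, while the Leibniz rule for the $(i_1,i_2)$-derivation $f$, combined with $f(v)=0$ because $v$ lies in the common subcategory, gives
\[
d(s_2)=u'^1-u'^2-f(w'v-1)=u'^1-u'^2-s_1 v.
\]
Any dg functor $H:C_{\C}(L_v(\C))\to \D$ implementing a homotopy between $F_1$ and $F_2$ then produces $F_1(w')-F_2(w')=dH(s_1)$, so the homology classes of $F_1(w')$ and $F_2(w')$ coincide, which shows that the map $[L_v(\C),\D]_{\C\downarrow\dgcat}\to \lambda_v(\D)$ is well-defined and surjective.

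For injectivity, suppose $F_1,F_2:L_v(\C)\to \D$ satisfy $[F_1(w')]=[F_2(w')]$. Mirroring the argument of Proposition \ref{prop:rightsided}, I would first replace $F_1$ by a homotopic functor whose $w'$-component agrees with $F_2(w')$, using a homotopy with trivial $s_2$-component (which forces a simultaneous adjustment of $F_1(u')$ dictated by $d(s_2)=u'^1-u'^2-s_1v$). Once $F_1(w')=F_2(w')$, the difference $F_1(u')-F_2(u')$ is a well-defined cycle, and I would construct an explicit homotopy by choosing $H(s_1)$ appropriately and then setting $H(s_2)$ so that $d(H(s_2))=F_1(u')-F_2(u')-H(s_1)F(v)$, exploiting the freedom in the cycle $H(s_1)$ to guarantee solvability just as on the right-sided side.

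The only substantive point is the sign bookkeeping: because $v$ now sits on the right in $d(u')=w'v-1$, the derivation computation places $s_1v$ (rather than $vs_1$) in the differential of $s_2$, and the explicit homotopy formulas are the left-right mirrors of those in Proposition \ref{prop:rightsided}. An alternative, fully formal route is to deduce the statement from Proposition \ref{prop:rightsided} applied to the opposite dg category, using the identifications $L_v(\C)\cong R_v(\C^{\mathrm{op}})^{\mathrm{op}}$ and $\lambda_v(\D)=\rho_v(\D^{\mathrm{op}})$, under which the representing object and the representing functor transpose correctly.
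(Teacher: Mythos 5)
Your proposal is correct and is exactly the argument the paper intends: the paper omits the proof of Proposition~\ref{prop:leftsided} entirely, saying only that it is similar to that of Proposition~\ref{prop:rightsided}, and your left--right mirror of that proof (including the cylinder differential $d(s_2)=u'^1-u'^2-s_1v$ and the surjectivity/injectivity steps) supplies precisely the omitted details. The alternative reduction via opposite categories, using $L_v(\C)\cong R_v(\C^{\mathrm{op}})^{\mathrm{op}}$ and $\lambda_v(\D)=\rho_v(\D^{\mathrm{op}})$, is also valid and arguably the cleanest way to make ``similar'' rigorous.
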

\noproof

Let us now introduce the dg category $I_v$ as follows: \[I_v:=L_v(R_v(\ground\langle v\rangle))\cong R_v(L_v(\ground\langle v\rangle)).\]
It is clear that $I_v$ is cofibrant over $\ground\langle v\rangle$; it has the same objects $O_1,O_2$ as $\ground\langle v\rangle$ and is freely generated by the morphisms $v:O_1\to O_2$, $w, w':O_2\to O_1$, $u:O_2\to O_2$ and $u':O_1\to O_1$. The differentials are defined thus: $d(u)=vw-1, d(u')=w'v-1$.

Let us now denote by $\ground \langle v, v^{-1}\rangle$ the category $\ground \langle v \rangle$ with $v$ inverted on both sides; in other words $\Hom(O_1,O_2)$ and $\Hom(O_2,O_1)$ are one-dimensional vector spaces spanned by $v, v^{-1}$ with $v\circ v^{-1}=\id_{O_1}; v^{-1}\circ v=\id_{O_2}$. Then we have the following result.

\begin{prop}\label{prop:2sidedresolution}
	The functor $I_v\to \ground \langle v, v^{-1}\rangle$ given by setting $u,u'$ to zero, is a quasi-equivalence; in other words, $I_v$ is a cofibrant replacement of $\ground \langle v, v^{-1}\rangle$. 
\end{prop}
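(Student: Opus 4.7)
The plan is to prove that the natural map $I_v \to \ground\langle v, v^{-1}\rangle$ is a quasi-equivalence by reducing it to a homotopy-pushout comparison and then settling the comparison via the modified-differential strategy of Proposition \ref{prop: Qresolution}. First, I would write $I_v = Q_v \coprod_{\ground\langle v\rangle} L_v$, a pushout of two cofibrations, hence a model for the homotopy pushout $Q_v \coprod^{\mathbb L}_{\ground\langle v\rangle} L_v$. By Proposition \ref{prop: Qresolution} and its symmetric left-sided version, $Q_v \simeq \ground\langle v^-\rangle$ and $L_v \simeq \ground\langle{}^{-}v\rangle$ in $\ground\langle v\rangle\downarrow\dgcat$, so by homotopy invariance of the derived pushout, $I_v \simeq \ground\langle v^-\rangle \coprod^{\mathbb L}_{\ground\langle v\rangle} \ground\langle{}^{-}v\rangle$. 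The ordinary pushout is easy to compute: the relations $vw = 1_{O_2}$ and $w'v = 1_{O_1}$ together give $w = w'vw = w'$, so $v$ becomes two-sided invertible and the pushout is $\ground\langle v, v^{-1}\rangle$. The proposition therefore amounts to showing that the natural comparison map from the derived pushout to the ordinary pushout, namely $I_v \to \ground\langle v, v^{-1}\rangle$ (sending $u, u' \to 0$ and $w, w' \mapsto v^{-1}$), is a quasi-equivalence.

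To verify this, I would mimic Proposition \ref{prop: Qresolution}: introduce the modified differential $d'$ on $I_v$ with $d'(u) = vw$ and $d'(u') = w'v$, and filter by the weight in which $v, w, w'$ have weight $1$ and $u, u'$ have weight $2$. The original differential $d$ preserves this filtration with associated graded $d'$, yielding a spectral sequence $E_1 = H(I_v, d') \Rightarrow H(I_v, d)$. The complex $(I_v, d')$ is the cobar-type construction for the quadratic monomial algebra $A = \ground\langle v, w, w'\rangle/(vw, w'v)$. Unlike the one-sided setting, $A$ has a nontrivial degree-three Koszul syzygy $w \otimes v \otimes w'$, so $H(I_v, d')$ contains extra degree-$\geq 1$ classes beyond $A$ itself; the first such class is $[w'u - u'w] \in H^1$ at weight $3$. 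The higher differentials of the spectral sequence are induced by the weight-decreasing part $d - d'$, which sends $u \mapsto -1_{O_2}$ and $u' \mapsto -1_{O_1}$. For instance, a direct calculation gives $d(w'u - u'w) = w - w'$ in $(I_v, d)$, which kills $[w'u - u'w]$ in $E_\infty$ and forces $[w] = [w']$ in $H^0$, precisely the identification required in $\ground\langle v, v^{-1}\rangle$.

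The main obstacle is verifying this cancellation systematically at every weight level, so that $E_\infty$ collapses exactly onto $\ground\langle v, v^{-1}\rangle$. Since $A$ is a finitely presented quadratic monomial algebra, its Koszul syzygies are combinatorially determined by overlaps between the relations $vw$ and $w'v$, and the corresponding boundaries arising from $d - d'$ can be written down explicitly. A weight-by-weight enumeration should then confirm that every extra class in $H(I_v, d')$ is killed by a higher differential, completing the proof.
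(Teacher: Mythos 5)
Your reduction is set up correctly and genuinely differs from the paper's argument, and you have put your finger on exactly the new difficulty relative to Proposition \ref{prop: Qresolution}: the quadratic monomial algebra $A=\ground\langle v,w,w'\rangle/(vw,\,w'v)$ has a nontrivial overlap $w'vw$ between its two relations, so $(I_v,d')$ is \emph{not} a resolution of $A$, the $E_1$-page carries extra positive-degree classes starting with $[w'u-u'w]$, and your computation $d(w'u-u'w)=w-w'$ is precisely the mechanism identifying $w$ with $w'$ in the limit. The gap is that the last step --- which is the actual content of the proposition --- is only asserted (``a weight-by-weight enumeration should then confirm\dots''). The extra classes are not a finite list: $H(I_v,d')$ contains $A$ (which is $8$-dimensional over $\ground\times\ground$, with basis of admissible words $1_{O_1},1_{O_2},v,w,w',wv,vw',wvw'$) together with an infinite family generated multiplicatively by the syzygy class $\zeta=[w'u-u'w]$ (for instance $a\zeta b$ and $\zeta v\zeta$, etc.), and one must show both that every such class eventually dies and that $A$ itself collapses onto the $4$-dimensional $\ground\langle v,v^{-1}\rangle$, i.e.\ that $wv-1_{O_1}$, $vw'-1_{O_2}$, $w-w'$ and $wvw'-w$ all become boundaries --- you verify only the third. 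Since the weight-lowering part of $d$ drops weight by $2$ but the surviving cycles live in many weights, there is no a priori degeneration at $E_3$, so ``killed by a higher differential'' needs to be organized systematically (e.g.\ multiplicatively, or via an explicit contracting homotopy on a complement of $\ground\langle v,v^{-1}\rangle$). None of this is unworkable --- the paper itself remarks that an algebraic proof ``similar to Proposition \ref{prop: Qresolution}'' exists --- but as written the proof stops exactly where the work begins.

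For comparison, the paper avoids this combinatorics altogether by a geometric argument: $I_v$ is the categorical cobar-construction of the simplicial chain coalgebra of a simplicial set $K$ with two vertices, three edges ($v,w,w'$) and two $2$-cells ($u,u'$), whose realization is a $2$-disc. Since $K$ is contractible and group-like, \cite[Corollary 4.20]{HL} yields that $I_v$ is quasi-equivalent to the $\ground$-linear contractible groupoid, whence the quotient map to $\ground\langle v,v^{-1}\rangle$ is a quasi-equivalence. That route trades your explicit syzygy analysis for an appeal to categorical Koszul duality; if you prefer to stay algebraic, completing your argument requires a full Anick-chain (or Gr\"obner-basis) computation of $H(I_v,d')$ and a proof that the resulting spectral sequence converges to $\ground\langle v,v^{-1}\rangle$.
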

\begin{proof}
The claimed result can be proved algebraically, similar to \ref{prop: Qresolution}; we give a more geometric proof. Consider the simplicial set $K$ with two vertices (corresponing to the objects $O_1$ and $O_2$), three non-degenerate 1-simplices connecting them (corresponding to the morphisms $v, w$ and $w'$) and two further non-degenerate 1-simplices	(corresponding to the morphisms $u$ and $u'$). The geometric realization of $K$ is a 2-disc; the two vertices are two diametrically opposite points on its boundary, the three 1-cells are the diameter and two half-circles connecting them, and the remaining two 2-cells are the two half-discs bounded by the two half-circles and the given diameter. The category $I_v$ is obtained from $K$ by the application of the categorical cobar-construction to $C_*(K)$, the simplicial chain coalgebra of $K$ (cf. \cite[Section 3]{HL} regarding this notion; note also, that $I_v$ is the value of the left adjoint to the dg nerve functor, cf. \cite[Section 1.3.1]{Lurie}).

Since the geometric realization of $K$ is a 2-disc, it is contractible. By direct inspection $K$ is group-like, i.e. its fundamental category is a groupoid; therefore by \cite[Corollary 4.20]{HL}, the dg category $I_v$ is quasi-equivalent to the category with a single object and $\ground$ worth of its endomorphisms. Therefore, the quotient map $I\to \ground \langle v, v^{-1}\rangle$ is a quasi-equivalence as claimed.
\end{proof}	
\begin{rem}
	The simplicial set $K$ is a (two-sided) localization of the simplicial interval viewed as an $\infty$-category; this point of view is presented in \cite[Section 3.3]{Cisinski}.
\end{rem}
We can now define the derived (two-sided) localization of a dg category at a given morphism.
\begin{defi}\label{def:2sidedlocalization}
Let $\C$ be a dg category, $O_1, O_2$ be two objects of $\C$ and $v\in H_n\Hom(O_1,O_2)$ determining a homotopy class of a functor $\ground\langle v\rangle\to \C$. Then the \emph{derived localization} of $\C$ at $v$ is the dg category \[\mathbb{L}_v\C:=\C\coprod_{\ground\langle v\rangle}I_v\simeq \C\coprod^{\mathbb{L}}_{\ground\langle v\rangle}\ground \langle v, v^{-1}\rangle.\]
\end{defi}

The dg category $\mathbb{L}_v\C$ comes equipped with (a homotopy class of) a functor $F:\C\to \mathbb{L}_v\C$; moreover $F(v)$ is an invertible morphism in $H_n(\mathbb{L}_v\C)$. It turns out that $\mathbb{L}_v\C$ is universal for this property.

\begin{prop}\label{prop:2sideduniversal}
	Let $G:\C\to \D$ be a dg functor for which $G(v)$ is invertible in $H_*(\D)$. Then there exists a unique up to homotopy functor $\mathbb{L}_v\C\to \D$ making the following diagram commute in the homotopy category of $\C\downarrow\dgcat$:
\[	\xymatrix{
	\C\ar^{F}[r]\ar^G[d]&\mathbb{L}_v\C\ar@{-->}[dl]\\
	\D	
	}
\]
\end{prop}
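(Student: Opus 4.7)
The plan is to reduce the universal property to the one-sided results already established. By the definition $\mathbb{L}_v\C = \C\coprod_{\ground\langle v\rangle} I_v$, I would first identify dg functors $\mathbb{L}_v\C\to\D$ in $\C\downarrow\dgcat$ with dg functors $I_v\to\D$ in $\ground\langle v\rangle\downarrow\dgcat$ whose restriction to $\ground\langle v\rangle$ agrees with $G|_{\ground\langle v\rangle}$. Since $I_v$ is cofibrant over $\ground\langle v\rangle$, the cobase-changed map $\C\hookrightarrow\mathbb{L}_v\C$ is a cofibration in $\C\downarrow\dgcat$, so this identification is compatible with the relative cylinder of Section 2 and therefore descends to homotopy classes.

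Next, I would use the pushout presentation $I_v \cong Q_v \coprod_{\ground\langle v\rangle} L_v$. A dg functor $I_v \to \D$ extending $G|_{\ground\langle v\rangle}$ is determined by the images of the generators $w, u, w', u'$, and these split cleanly into $Q_v$-data $(w,u)$ with $d(u) = G(v)w - 1$ and $L_v$-data $(w',u')$ with $d(u') = w'G(v) - 1$. Unwinding the construction of Section 2, the relative cylinder $C_{\ground\langle v\rangle}(I_v)$ likewise splits: the barred generators $\bar w,\bar u$ involve only the $Q_v$-copies and $\bar{w'},\bar{u'}$ only the $L_v$-copies (because $v$ itself lies in $\ground\langle v\rangle$ and so is not doubled). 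Hence homotopies on the two sides are independent, and applying the cylinder argument from Proposition \ref{prop:rightsided} and its left-sided analogue factor by factor yields a natural bijection
\[
[I_v, \D]_{\ground\langle v\rangle\downarrow\dgcat} \;\cong\; \rho_{G(v)}(\D) \times \lambda_{G(v)}(\D).
\]

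Finally, under the hypothesis that $G(v)$ is invertible in $H_*(\D)$, each factor on the right is a one-element set: if $G(v)w_1 = G(v)w_2 = 1$ in homology, then multiplying on the left by the two-sided homology inverse of $G(v)$ forces $w_1 = w_2 = G(v)^{-1}$, and the left-sided case is symmetric. Combining these steps, $[\mathbb{L}_v\C, \D]_{\C\downarrow\dgcat}$ is a singleton, which establishes simultaneously the existence and the uniqueness up to homotopy of the factorization. The main obstacle will be the product decomposition displayed above, which requires a careful unpacking of the cylinder construction to confirm that the $Q_v$- and $L_v$-sides of the barred generators do not interact; once this is in hand, the rest is a direct application of the one-sided representability theorems.
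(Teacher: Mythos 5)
Your proof is correct, and it reaches the conclusion by a genuinely different organization of the same ingredients. The paper argues \emph{sequentially}: since $G(v)$ is two-sided invertible in $H_*(\D)$, its right inverse is unique, so Proposition \ref{prop:rightsided} extends $G$ to $R_v(\C)\to\D$ uniquely up to homotopy under $\C$; then Proposition \ref{prop:leftsided}, applied over $R_v(\C)$, extends further to $L_vR_v(\C)\simeq\mathbb{L}_v\C\to\D$ uniquely up to homotopy. You instead argue \emph{in parallel}: you adjoint the pushout $\mathbb{L}_v\C=\C\coprod_{\ground\langle v\rangle}I_v$ away, decompose $I_v\cong Q_v\coprod_{\ground\langle v\rangle}L_v$, and show $[I_v,\D]\cong\rho_{G(v)}(\D)\times\lambda_{G(v)}(\D)$, each factor being a singleton. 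The extra verification your route demands --- that the relative cylinder $C_{\ground\langle v\rangle}(I_v)$ splits as $C_{\ground\langle v\rangle}(Q_v)\coprod_{\ground\langle v\rangle}C_{\ground\langle v\rangle}(L_v)$ --- does go through: since $v$ lies in the base $\ground\langle v\rangle$ one has $f(v)=0$, so $d(\bar u)=u^1-u^2\mp v\bar w$ and $d(\bar u')=u'^1-u'^2\mp\bar w'v$ involve only their own sides, and homotopies glue and restrict independently. What your version buys is that existence and uniqueness come out simultaneously from a single singleton mapping set, sidestepping the paper's (slightly terse) final step of promoting uniqueness in the undercategory of $R_v(\C)$ to uniqueness in the undercategory of $\C$; what it costs is the explicit unpacking of the cylinder, which the sequential argument never needs to touch.
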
	
\begin{proof}
	Since the inverse to $G(v)$ in $H_*(\D)$ is unique, Proposition \ref{prop:rightsided} implies that $G$ extends to a functor $R_v(\C)\to\D$ uniquely up to homotopy in the undercategory of $\C$. Similarly, Proposition \ref{prop:leftsided} implies that it further extends to a functor $L_vR_v(\C)\simeq \mathbb{L}_v(\C)\to\D$ uniquely up to homotopy in the undercategory of $R_v(\C)$ and a fortiori, uniquely in the homotopy undercategory of $\C$.
\end{proof}


Now, let $C$ be a dg algebra and $v\in H_n(A)$. Then $\mathbb{L}_v(C)$ is a dg algebra supplied with a map $f:C\to \mathbb{L}_v(C)$. The following result holds.
	\begin{prop}\label{prop:2sideduniversalalg}
		Let $g:C\to D$ be a dg algebra map for which $v(v)$ is invertible in $H_*(D)$. Then there exists a unique up to homotopy map $\mathbb{L}_v C\to D$ making the following diagram commute in the homotopy category o $C\downarrow\DGA$:
		\[	\xymatrix{
			C\ar^{f}[r]\ar^g[d]&\mathbb{L}_v\C\ar@{-->}[dl]\\
			D	
		}
		\]
	\end{prop}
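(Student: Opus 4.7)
The plan is to mimic the proof of Proposition \ref{prop:2sideduniversal} word for word, but within $\DGA$ using the algebra versions of the one-sided representability results. First, define the dg algebra $\mathbb{L}_v C := C \coprod_{\ground[v]} I_v^{\text{alg}}$, where $I_v^{\text{alg}}$ is the free dg algebra on generators $v, w, w', u, u'$ (of degrees $n, -n, -n, n+1, n+1$) with $d(u)=vw-1$ and $d(u')=w'v-1$. One needs the algebra analogue of Proposition \ref{prop:2sidedresolution}, namely that the quotient $I_v^{\text{alg}} \to \ground[v^{\pm 1}]^{\text{alg}}$ that kills $u$ and $u'$ is a quasi-isomorphism, where $\ground[v^{\pm 1}]^{\text{alg}} := \ground\langle v, w\rangle/(vw-1,\, wv-1)$. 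This can be proved by the same Koszul-style argument used in Proposition \ref{prop: Qresolution}, or alternatively deduced from the categorical statement via the full embedding $\DGA\hookrightarrow \ground\downarrow\dgcat$ of homotopy categories recorded in the proof of Proposition \ref{prop:killalg}. Either way, $\mathbb{L}_v C \simeq C \coprod^{\mathbb L}_{\ground[v]} \ground[v^{\pm 1}]^{\text{alg}}$.

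Next I would apply Proposition \ref{prop:rightsidedalg}. Because $g(v)$ is invertible in $H_*(D)$, the two-sided inverse of $g(v)$ is its unique right inverse, so $\rho_v(D)$ is a singleton; the proposition then produces a unique-up-to-homotopy extension $\tilde g : R_v(C) \to D$ of $g$ in $C\downarrow\DGA$. I would then apply the left-sided algebraic analogue of Proposition \ref{prop:leftsided} to $\tilde g$ and to the class $v\in H_n(R_v(C))$: invertibility of $g(v)$ forces the corresponding set $\lambda_v(D)$ (relative to $\tilde g$) to be the singleton consisting of the unique left inverse, which coincides with the right inverse already selected. Hence $\tilde g$ extends uniquely up to homotopy in $R_v(C)\downarrow\DGA$ to a map $L_v R_v(C) \simeq \mathbb{L}_v C \to D$, and since uniqueness in the homotopy undercategory of $R_v(C)$ \emph{a fortiori} gives uniqueness in the homotopy undercategory of $C$, the required dotted arrow is obtained.

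The only ingredient not entirely formal beyond the categorical case is the algebra analogue of Proposition \ref{prop:2sidedresolution}, and this is where I would expect to spend the most effort. Everything else is a formal consequence of the one-sided algebra representability results, combined with the observation that invertibility of $g(v)$ collapses both $\rho_v(D)$ and $\lambda_v(D)$ to singletons, so each successive one-sided extension is literally unique up to homotopy rather than merely parametrized by a set of candidate inverses.
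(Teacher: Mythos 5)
Your proposal is correct and matches the paper's own (one-line) proof, which simply invokes the argument of Proposition~\ref{prop:2sideduniversal} together with Proposition~\ref{prop:rightsidedalg} and its left-sided analogue. The only difference is that you additionally verify the algebra analogue of Proposition~\ref{prop:2sidedresolution}; this is not actually needed for the universal property itself, since the argument runs entirely through the one-sided representability statements.
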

\begin{proof}
	Completely analogous to the proof of Proposition \ref{prop:2sideduniversal} taking into account Proposition \ref{prop:rightsidedalg} and its left-sided analogue.
		\end{proof}

\section{One-sided localization in pretriangulated categories}
In this section we show how killing morphisms in pretriangulated dg categories is related to one-sided inversion of morphisms. 
 \begin{theorem}\label{killingvslocalisation}
Let $\C$ be a pretriangulated dg category, and let
\begin{equation}\label{triangleinpretriangulated}
	\begin{tikzcd}
		A \arrow[r,"x"] &
		B \arrow[r,"v"] &
		C \arrow[r,"s"] & \Sigma A
	\end{tikzcd}
\end{equation}
 be a triangle in $\mathcal{C}$. 
Then we have weak equivalences 
 	$$ \mathcal{C}/v \simeq R_x(\mathcal{C})$$
 	and
 		$$ \mathcal{C}/v \simeq L_s(\mathcal{C})$$
  in $\C\downarrow\dgcat$.
  \end{theorem}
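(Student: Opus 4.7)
The plan is to compare the representable functors on $\C\downarrow\dgcat$. By Propositions \ref{prop:killcat}, \ref{prop:rightsided} and \ref{prop:leftsided}, homotopy classes of maps out of $\C/v$, $R_x(\C)$ and $L_s(\C)$ in $\C\downarrow\dgcat$ are represented by $\kappa_v$, $\rho_x$ and $\lambda_s$ respectively, so I would reduce the theorem to constructing, for every dg functor $F\colon\C\to\D$, natural bijections $\kappa_v(\D)\cong\rho_x(\D)$ and $\kappa_v(\D)\cong\lambda_s(\D)$.

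The key input will be the pretriangulated hypothesis: realising $C$ as the mapping cone of $x$ supplies $\C$ with auxiliary degree-zero morphisms $t\colon C\to B$ and $t'\colon\Sigma A\to C$ satisfying the graded identities
\[ tv=1_B,\quad st'=1_{\Sigma A},\quad tt'=0,\quad sv=0,\quad vt+t's=1_C, \]
together with the differential relations $d(t)=-x\Sigma^{-1}s$ and $d(t')=vx\Sigma^{-1}$, where $\Sigma^{-1}$ denotes the suspension isomorphism between $\Hom(-,\Sigma A)$ and the shift of $\Hom(-,A)$. All of these identities are preserved by the dg functor $F$.

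For the first bijection I would send $w\in\kappa_v(\D)$ (for which $dw=F(v)$) to the pair $\bar w:=\Sigma^{-1}F(s)w$ and $u:=-F(t)w$; a direct calculation using the cone identities above yields $d\bar w=0$ and $du=F(x)\bar w-1_{F(B)}$, so $(\bar w,u)$ represents an element of $\rho_x(\D)$. Conversely, $(\bar w,u)\in\rho_x(\D)$ would be sent to $w:=F(t')\sigma\bar w-F(v)u$, which satisfies $dw=F(v)$. For the second bijection the assignment is symmetric: send $w\in\kappa_v(\D)$ to $\bar w:=F(t')-wF(x)\Sigma^{-1}$ together with homotopy $u:=-wF(t)$, which satisfy $d\bar w=0$ and $du=\bar w F(s)-1_{F(C)}$, and define the inverse analogously.

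It will then remain to check that each pair of maps descends to mutually inverse bijections on the equivalence classes appearing in Propositions \ref{prop:killcat}, \ref{prop:rightsided} and \ref{prop:leftsided}, and that both assignments are natural in $\D$; these verifications are routine given the explicit formulas. The main obstacle I anticipate is selecting a concrete model of $C$ as a cone inside $\C$ so that the structural morphisms $t$ and $t'$ are unambiguously defined; however this is harmless homotopically, since the representing objects $\C/v$, $R_x(\C)$ and $L_s(\C)$ are determined only up to weak equivalence in $\C\downarrow\dgcat$, so different cone models yield homotopic replacements for $t$ and $t'$ and hence the same bijections at the level of homotopy classes.
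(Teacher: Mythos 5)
Your overall strategy --- reduce to a natural comparison of the representable functors $\kappa_v$, $\rho_x$, $\lambda_s$ via auxiliary morphisms coming from the cone of $x$ --- is the same as the paper's, but the execution has a genuine gap. Your explicit mutually inverse formulas rely on the \emph{strict} identities $tv=1_B$, $sv=0$, $st'=1_{\Sigma A}$, $tt'=0$, $vt+t's=1_C$. These hold only when $C$ is literally the graded biproduct $B\oplus\Sigma A$ with twisted differential and $v,s$ are the canonical inclusion and projection, i.e.\ when the triangle is a strict cone triangle in $\C$. For a general triangle in a pretriangulated dg category one only has $sv$ null-homotopic, $vt+t's$ homologous to $1_C$, and so on; indeed the cone of $x$ need not be strictly realised as an object of $\C$ at all, only up to homotopy equivalence in the module category. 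Your proposed fix --- replace the triangle by a cone model, ``harmless homotopically'' --- is not automatic: it changes the object $C$ and the morphisms $v,s$ themselves, not merely their homology classes, so you would need an additional lemma that $\C/v$ and $L_s(\C)$ are invariant under composing $v$ (resp.\ $s$) with a homotopy equivalence $C\to C'$; the paper only proves invariance under changing $v$ within its homology class with source and target fixed. A second, smaller, issue: an element of $\rho_x(\D)$ is a homology class $[\bar w]$ alone, not a pair $(\bar w,u)$, so your inverse formula $w=F(t')\sigma\bar w-F(v)u$ must be shown independent of the choice of nullhomotopy $u$; different choices of $u$ change $w$ by $F(v)\zeta$ for a cycle $\zeta$, and this is a boundary only because $F(v)$ is itself a boundary whenever $\rho_x(\D)\neq\emptyset$ --- true, but it needs saying.

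For comparison, the paper sidesteps both problems: it uses only the up-to-homotopy data ($vx=d(t)$, $(\Sigma x)s=d(u)$ with $vu+ts$ homologous to $1_C$), defines a comparison map $\rho_x(\D)\to\kappa_v(\D)$ by $y\mapsto vz+ty'$ where $d(z)=1-xy'$, and proves bijectivity not by an explicit inverse but by showing that both sets are affine spaces over $H_1\Hom_{\D}(B,C)$ (via the exact sequence obtained by applying $\Hom_{\D}(B,-)$ to the triangle) and that the map is affine. The price is that this argument works only for \emph{pretriangulated} $\D$, so a further descent step is needed: the induced map of representing objects is first a weak equivalence in the Morita model, and one passes back to $\dgcat$ using essential surjectivity of $J$ and quasi-fully-faithfulness of the Yoneda embedding into the pretriangulated hull. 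If you repaired your argument (e.g.\ by working with the strict cone in the pretriangulated hull and proving the invariance lemma above), you would obtain the bijection for all $\D$ and could skip that descent step entirely --- but as written the strict cone identities, on which every formula depends, are unjustified.
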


\begin{proof}
	We prove the first weak equivalence; the argument for the second is similar.
 The morphisms in the triangle
 (\ref{triangleinpretriangulated})
 are degree $0$ cycles, and the composition of any two is a boundary. So we have $vx=d(t)$ and $(\Sigma x)s=d(u)$ for some morphisms $t$ and $u$ of degree $1$; it is always possible to choose $t$ and $u$ so that $vu+ts$ is homologous to $1_C$.

For a dg category $\mathcal{D}$ over $\mathcal{C}$, we wish to compare
$$\rho_x(\mathcal{D}) = \left\{ y\in Z_0\Hom_{\mathcal{D}}(B,A)\mid xy-1_B\in B_0\Hom_{\mathcal{D}}(B,B)\right\}/B_0\Hom_{\mathcal{D}}(B,A)$$
and
$$\kappa_v(\mathcal{D})=\left\{w\in\Hom_{\mathcal{D}}(B,C)_1\mid d(w)=v\right\}/B_1\Hom_{\mathcal{D}}(B,C).$$

Given $y\in \rho_x(\mathcal{D})$, choose its representative $y'\in Z_0\Hom_{\mathcal{D}}(B,A)$ and $z\in \Hom_{\mathcal{D}}(B,B)_1$ such that
$d(z)=1-xy'$. Then $d(vz)=v(1-xy')=v-d(ty')$ so $$w_z:=vz+ty' \mod B_1\Hom_{\mathcal{D}}(B,C)\in \kappa_v(\mathcal{D}).$$ If  $z'\in \Hom_{\mathcal{D}}(B,B)_1$ is another element such that
$d(z)=1-xy'$ then 
$w_z-w_{z'}=v(z-z')\in B_1\Hom(B,C)$ so 
$w_z$ and $w_{z'}$ determine the same class in  $\kappa_v(\mathcal{B})$. Similarly, the class of $w_z$ modulo $B_1\Hom_{\mathcal{D}}(B,C)$ does not depend on the choice of $y'$. 
The correspondence $y\mapsto w_z$ defines a map
\begin{equation}\label{comparisonmap}
	j_{\mathcal{D}}:	\rho_x(\mathcal{D}) \longrightarrow \kappa_v(\mathcal{D}),
	\end{equation}
which defines a morphism of functors $j:\rho_x \to \kappa_v$. On representing objects, it is given by the morphism $$J: \mathcal{C}/v \to R_x(\mathcal{C})$$
under $\mathcal{C}$
that sends $w$ to $vz+ty$. Here $w,y,z$ are the elements arising in the construction of
 $\mathcal{C}/v $ and $R_x(\mathcal{C})$ from $\mathcal{C}$, and $t$ is the morphism in $\mathcal{C}$ fixed above.

Assume now that $\mathcal{D}$ is pretriangulated. Considering the image of (\ref{triangleinpretriangulated}) in the triangulated category $H_0(\mathcal{D})$, we see that 
$[v]\in H_0 \Hom_{\mathcal{D}}(B,C)=0$ if and only if 
$[x]\in H_0 \Hom_{\mathcal{D}}(A,B)$ is right invertible. This implies that 
$\rho_x(\mathcal{D})=\emptyset$ if and only if $\kappa_v(\mathcal{D})=\emptyset$.
So assume both are nonempty.
 Applying the functor $\Hom_{\mathcal{D}}(B,-)$ to (\ref{triangleinpretriangulated}) we obtain a short exact sequence (since $v$ is zero in homology by our assumption):
\[
\xymatrix{
	0\ar[r]&H_1\Hom_{\mathcal{D}}(B,C)\ar^{[s]\circ -}[r]&H_0\Hom_{\mathcal{D}}(B,A)\ar^{[x]\circ -}[r]&H_0\Hom_{\mathcal{D}}(B,B)\ar[r]&0
}
\]
The set $\rho_x(\mathcal{B})$ is identified with the preimage of 
$\id\in H_0\Hom(B,B)$ and so, is an affine space for 
$H_1\Hom(B,C)$. The set $\kappa_v(\mathcal{D})$ is likewise an affine space for $H_1\Hom(B,C)$.
 We claim that the map $j_{\mathcal{D}}:\rho_x(\mathcal{D}) \longrightarrow \kappa_v(\mathcal{D})$ constructed above is compatible with this affine space structure (so it is an isomorphism).

To prove the claim, take $\alpha\in H_1\Hom_{\D}(B,C)$; then the action of 
$\alpha$ on $y\in H_0\Hom_{\D}(B,A)$ has the form
$\alpha\cdot y=y+s\alpha$. Furthermore, $x(y+s\alpha)=1+d(z+u\alpha)$ and therefore,
\[
j(\alpha\cdot y)=v(z+u\alpha)+t(y+s\alpha)=w_z+(vu+ts)\alpha  =  w_z + \alpha = \alpha\cdot w_z
\]
since by assumption $vu+ts$ is homologous to $1_C$.

We have shown that $j_{\mathcal{D}}$ is a bijection for pretriangulated dg categories $\mathcal{D}$ under $\mathcal{C}$. We deduce that $J$ induces an isomorphism in the homotopy category of the undercategory of $\mathcal{C}$ in the Morita model of the category of dg categories, i.e. the map induced on pretriangulated hulls by $J$ is a weak equivalence in $\C\downarrow\dgcat$. Now note that $J$ is essentially surjective; in fact, the objects of $\mathcal{C}/v$ and $R_x(\mathcal{C})$ both coincide with $\C$, and $J$ is the identity on objects. Since the Yoneda functor from any dg category into its pretriangulated hull is quasi fully faithful, we deduce that $J$ was already an isomorphism in the homotopy category of $\C\downarrow\dgcat$.
\end{proof}

By rotating the triangle in the theorem above, we obtain the following.
\begin{cor}
	Let $\C$ be a pretriangulated dg category, and let
	\begin{equation*}
		\begin{tikzcd}
			A \arrow[r,"x"] &
			B \arrow[r,"y"] &
			C \rightsquigarrow
		\end{tikzcd}
	\end{equation*}
	be a triangle in $\mathcal{C}$. 
	Then we have a weak equivalence 
	$$  R_y(\mathcal{C})
\simeq L_x(\mathcal{C})$$
	in $\C\downarrow\dgcat$.
	\end{cor}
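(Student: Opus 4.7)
The plan is to apply Theorem \ref{killingvslocalisation} to a rotation of the given triangle. Complete the given triangle to the full distinguished triangle $A \xrightarrow{x} B \xrightarrow{y} C \xrightarrow{z} \Sigma A$, and rotate it to the distinguished triangle
\[
B \xrightarrow{y} C \xrightarrow{z} \Sigma A \xrightarrow{-\Sigma x} \Sigma B
\]
in $\mathcal{C}$. Applying Theorem \ref{killingvslocalisation} to this rotated triangle with $y$, $z$, and $-\Sigma x$ playing the roles of $x$, $v$, and $s$ respectively, one obtains weak equivalences
\[
R_y(\C) \simeq \C/z \simeq L_{-\Sigma x}(\C)
\]
in $\C\downarrow\dgcat$.

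The remaining step is to exhibit a weak equivalence $L_{-\Sigma x}(\C) \simeq L_x(\C)$ in $\C\downarrow\dgcat$. For any pretriangulated dg category $\mathcal{D}$ under $\C$, the suspension $\Sigma$ is an autoequivalence of $H_*(\mathcal{D})$, so the assignment $w \mapsto -\Sigma^{-1} w$ gives a bijection from left inverses (in homology) of $-\Sigma x$ to left inverses of $x$: if $w(-\Sigma x) = 1$ then $(-\Sigma^{-1}w)x = -\Sigma^{-1}(w\Sigma x) = -\Sigma^{-1}(-1) = 1$. In the notation of Section 5, this is a natural isomorphism $\lambda_{-\Sigma x} \cong \lambda_x$ of functors on pretriangulated dg categories under $\C$, and by Proposition \ref{prop:leftsided} it identifies $L_{-\Sigma x}(\C)$ and $L_x(\C)$ in the Morita homotopy category of $\C\downarrow\dgcat$. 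The Yoneda argument from the last paragraph of the proof of Theorem \ref{killingvslocalisation}---both dg categories have the same object set as $\C$, and the Yoneda functor into the pretriangulated hull is quasi fully faithful---then upgrades this identification to a weak equivalence in $\C\downarrow\dgcat$ itself. Composing the two equivalences yields $R_y(\C) \simeq L_x(\C)$, as desired.

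The main obstacle is the second step: the bijection of representing sets is immediate, but promoting it to a weak equivalence of dg categories requires reapplying the Morita-to-ordinary upgrade used at the end of the theorem's proof. The first step, by contrast, is a direct invocation of the theorem on a rotated triangle, using only that rotations of distinguished triangles in a pretriangulated category are again distinguished.
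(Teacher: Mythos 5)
Your proposal is correct and follows the paper's intended route: the paper offers no written proof beyond ``by rotating the triangle,'' and applying Theorem \ref{killingvslocalisation} to the rotation $B \xrightarrow{y} C \xrightarrow{z} \Sigma A \xrightarrow{-\Sigma x} \Sigma B$ is exactly what is meant. Your second step, identifying $L_{-\Sigma x}(\C)$ with $L_x(\C)$ via the natural isomorphism $\lambda_{-\Sigma x}\cong\lambda_x$ on pretriangulated categories under $\C$ followed by the Morita-to-ordinary Yoneda upgrade, is a genuine detail the paper silently elides, and you handle it by the same mechanism used at the end of the theorem's proof.
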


Finally, we deduce an interpretation of Drinfeld's quotient \cite{Drinfeld} as a factorization into two consecutive morphism-killing constructions. Let $\C$ be a pretriangulated dg category, $B$ be an object in $\C$, and denote by $\B$ the full subcategory of $\C$ on $B$. We consider Drinfeld's dg quotient $\C/\B$; recall that it is characterized by various universal properties \cite{Tabuada}.

\begin{cor}\label{Drinfeldfactorisation}
	Given
				\begin{equation*}
				\begin{tikzcd}
					A \arrow[r,"x"] &
					B \arrow[r,"y"] &
					C \rightsquigarrow
				\end{tikzcd}
			\end{equation*}
			 a triangle in $\mathcal{C}$,
		 we have  weak equivalences
			$$\C/\B \simeq (\C/x)/y \simeq (\C/y)/x.$$
			in $\C\downarrow\dgcat$.
\end{cor}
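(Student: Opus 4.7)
The plan is to deduce both equivalences from Theorem~\ref{killingvslocalisation} combined with the universal property of the Drinfeld quotient. I focus on $\C/\B \simeq (\C/y)/x$; the other equivalence is completely symmetric, using $\C/x \simeq L_y(\C)$, which follows from Theorem~\ref{killingvslocalisation} applied to a rotation of the given triangle.

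First I would use Theorem~\ref{killingvslocalisation} to rewrite $(\C/y)/x \simeq R_x(\C)/x$ in $\C\downarrow\dgcat$, reducing the problem to identifying $R_x(\C)/x$ with the Drinfeld quotient $\C/\B$. The key observation is that $B$ becomes contractible in $R_x(\C)/x$: recall that $R_x(\C)$ is generated over $\C$ by a cycle $w : B \to A$ of degree $0$ and a morphism $u : B \to B$ of degree $1$ with $d(u) = xw - 1_B$, and $R_x(\C)/x$ further adjoins a morphism $p : A \to B$ of degree $1$ with $d(p) = x$. A direct check yields
\[
d(pw - u) \;=\; xw - (xw - 1_B) \;=\; 1_B,
\]
so $pw - u$ is a contracting homotopy of $\id_B$ in $R_x(\C)/x$.

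By the universal property of the Drinfeld quotient this contracting homotopy determines a canonical map $\C/\B \to R_x(\C)/x$ in the homotopy category of $\C\downarrow\dgcat$. Conversely, the canonical functor $\C \to \C/\B$ factors through a dg category carrying a contracting homotopy $\epsilon : B \to B$ of $\id_B$, and the composites $\epsilon \circ x$ and $y \circ \epsilon$ serve as explicit null-homotopies of $x$ and $y$. Two applications of Proposition~\ref{prop:killcat} then produce a map $(\C/y)/x \to \C/\B$ in the homotopy category of $\C\downarrow\dgcat$.

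The main obstacle is verifying that these two maps are mutually inverse. I would do this at the level of represented functors: after passing to pretriangulated hulls, both $\C/\B$ and $(\C/y)/x$ corepresent the functor sending a pretriangulated dg category $\D$ under $\C$ to the set of homotopy classes of functors $\C \to \D$ for which $B$ becomes a zero object, so the comparison maps are inverse in the Morita homotopy category. Since both sides have the same objects as $\C$, this lifts to a weak equivalence in $\C\downarrow\dgcat$ in the standard model structure by the same argument used in the final paragraph of the proof of Theorem~\ref{killingvslocalisation}, via the quasi fully faithfulness of the Yoneda embedding into the pretriangulated hull.
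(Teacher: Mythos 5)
Your argument is correct in substance but takes a genuinely different route from the paper's. The paper's proof is a three-link chain: $\C/\B \simeq \mathbb{L}_z(\C)$ where $z:C\to\Sigma A$ is the connecting morphism (Lemma \ref{Drinfeldquotientaslocalisation}, matching universal properties via \cite{Tabuada}), then $\mathbb{L}_z(\C)\simeq L_zR_z(\C)$ (the factorization of two-sided localization from Section 5), and finally $L_zR_z(\C)\simeq(\C/x)/y$ by applying Theorem \ref{killingvslocalisation} to rotations of the triangle. You bypass the two-sided localization entirely: you rewrite $(\C/y)/x\simeq R_x(\C)/x$, exhibit the explicit contracting homotopy $pw-u$ of $\id_B$ there, build comparison maps in both directions by hand, and rerun the corepresentability-plus-Yoneda upgrade from the end of the proof of Theorem \ref{killingvslocalisation}. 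Both work; the paper's version is shorter given its preparatory lemmas and records the conceptually useful intermediate fact that $\C/\B$ is the localization at the connecting map, while yours invokes Theorem \ref{killingvslocalisation} only once and produces explicit functors. Two points in your write-up should be tightened. First, since you work in $\C\downarrow\dgcat$ the functor $F:\C\to\D$ is fixed, so what both sides corepresent on pretriangulated $\D$ is the subterminal functor ``a singleton if $F(B)\simeq 0$ in $H_0(\D)$, empty otherwise''; for $\C/\B$ this follows most cheaply from the observation (made in the paper's introduction) that $\C/\B=\C/\id_B$ together with Proposition \ref{prop:killcat}, the affine space over $H_1\End_{\D}(F(B))$ collapsing to a point once $F(B)$ is contractible, so Tabuada's universal property is not even needed. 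Second, to see that $[(\C/y)/x,\D]_{\C\downarrow\dgcat}$ is empty unless $F(B)\simeq 0$ you need the (easy but unstated) fact that $[F(x)]=0$ in $H_0(\D)$ makes $F(y)$ a split monomorphism, so $[F(y)]=0$ then forces $\id_{F(B)}=0$; without this the two corepresented functors have not been shown to have the same support, and the mutual-inverse claim would be incomplete.
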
 	 

\begin{proof}
	Denote by $z:C\rightarrow\Sigma A$ the connecting morphism for the triangle. We then have
	$$\C/\B \simeq \mathbb{L}_z(\C) \simeq L_zR_z(\C) \simeq (\C/x)/y,$$
	by Lemma~\ref{Drinfeldquotientaslocalisation}, the connection between two-sided and one-sided localisation established in Section 5 and Theorem~\ref{killingvslocalisation}. 
	\end{proof}
The following result, used in the proof of Corollary \ref{Drinfeldfactorisation}, is unsurprising and well known; we include an argument for the reader's convenience.
\begin{lem}\label{Drinfeldquotientaslocalisation}
		Given
	\begin{equation*}
		\begin{tikzcd}
			A \arrow[r] &
			B \arrow[r] &
			C \arrow[r,"z"] & \Sigma A
		\end{tikzcd}
	\end{equation*}
a triangle in $\C$, we have a weak equivalence
$$\C/\B \simeq \mathbb{L}_z(\C)$$
in $\C\downarrow\dgcat$.
	\end{lem}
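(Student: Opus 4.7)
The plan is to identify both $\C/\B$ and $\mathbb{L}_z(\C)$ with the same universal object in the homotopy category of $\C\downarrow\dgcat$ via matching universal properties. By Tabuada's characterization of Drinfeld's dg quotient \cite{Tabuada}, for any pretriangulated dg category $\D$ under $\C$, homotopy classes of maps $\C/\B\to\D$ in $\C\downarrow\dgcat$ correspond to homotopy classes of dg functors $F:\C\to\D$ for which $F(B)$ is contractible in $H_0(\D)$. By Proposition \ref{prop:2sideduniversal}, homotopy classes of maps $\mathbb{L}_z(\C)\to\D$ in $\C\downarrow\dgcat$ correspond to homotopy classes of dg functors $F:\C\to\D$ for which $F(z)$ is invertible in $H_0(\D)$.

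The main technical observation is that these two conditions on $F$ are equivalent whenever $\D$ is pretriangulated. Applying $F$ to the given triangle in $\C$ yields a triangle in $\D$. If $F(B)\simeq 0$ in $H_0(\D)$, then the triangle $F(A)\to 0\to F(C)\to \Sigma F(A)$ in $H_0(\D)$, with the two central arrows necessarily zero, forces $F(z):F(C)\to\Sigma F(A)$ to be an isomorphism. Conversely, if $F(z)$ is invertible, then rotating the triangle once exhibits $\Sigma F(B)$ as the cone of the isomorphism $F(z)$, whence $F(B)\simeq 0$ in $H_0(\D)$.

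Since $\C/\B$ is itself pretriangulated (Drinfeld's construction preserves pretriangulatedness) and the structure map $\C\to\C/\B$ sends $B$ to a contractible object, the observation above implies that $z$ becomes invertible in $\C/\B$. Proposition \ref{prop:2sideduniversal} then produces a map $J:\mathbb{L}_z(\C)\to\C/\B$ in $\C\downarrow\dgcat$, unique up to homotopy. To conclude that $J$ is a weak equivalence in $\C\downarrow\dgcat$, we argue as at the end of the proof of Theorem \ref{killingvslocalisation}: the coincidence of universal properties on pretriangulated targets shows that $J$ is an isomorphism in the Morita homotopy category of $\C\downarrow\dgcat$, i.e.\ induces a quasi-equivalence on pretriangulated hulls; and since $J$ acts as the identity on objects (both $\mathbb{L}_z(\C)$ and $\C/\B$ have the same objects as $\C$), quasi full faithfulness of the Yoneda embedding into the pretriangulated hull upgrades this to a weak equivalence in $\C\downarrow\dgcat$ itself. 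The hardest step is marshalling Tabuada's universal property in a form directly compatible with Proposition \ref{prop:2sideduniversal}, together with the triangulated-category observation above; once those are in place the remainder of the argument is formal.
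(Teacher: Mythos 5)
Your proposal is correct and follows essentially the same route as the paper: both identify $\C/\B$ and $\mathbb{L}_z(\C)$ by matching their universal properties on pretriangulated targets (via Tabuada's characterization of the dg quotient and Proposition \ref{prop:2sideduniversal}), and then descend from a quasi-equivalence of pretriangulated hulls to a quasi-equivalence of the categories themselves using quasi-full-faithfulness of the Yoneda embedding. The only difference is that you spell out the triangulated-category argument that ``$F(B)$ contractible'' is equivalent to ``$F(z)$ invertible'', which the paper leaves implicit in its citation of Tabuada.
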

\begin{proof}
The dg categories $\C/\B$ and $\mathbb{L}_z(\C)$ have quasi-equivalent pretriangulated hulls $[\C/\B]$ and $[\mathbb{L}_z(\C)]$ since they have identical universal properties according to \cite[Theorem 4.0.1]{Tabuada}. The resulting quasi-equivalence $[\C/\B]\to [\mathbb{L}_z(\C)]$ restricts to a dg functor $\C/\B\to \mathbb{L}_z(\C)$ and is a quasi-equivalence since the Yoneda embeddings $\C/\B\to[ \C/\B]$ and $\mathbb{L}_z(\C)\to [\mathbb{L}_z(\C)]$ are quasi-fully faithful dg functors.
\end{proof}	

\end{document}